\title{Resolutions for principal series representations  of $p$-adic ${\rm GL}_n$}
\author{Rachel Ollivier}
\address{Columbia University, Mathematics, 2990 Broadway, New York, NY 10027}
\email{ollivier@math.columbia.edu}
\dedicatory{To Peter Schneider, on the occasion of his 60th birthday.}
\def\@tocline#1#2#3#4#5#6#7{\relax
  \ifnum #1>\c@tocdepth % then omit
  \else
    \par \addpenalty\@secpenalty\addvspace{#2}%
    \begingroup \hyphenpenalty\@M
    \@ifempty{#4}{%
      \@tempdima\csname r@tocindent\number#1\endcsname\relax
    }{%
      \@tempdima#4\relax
    }%
    \parindent\z@ \leftskip#3\relax \advance\leftskip\@tempdima\relax
    \rightskip\@pnumwidth plus4em \parfillskip-\@pnumwidth
    #5\leavevmode\hskip-\@tempdima
      \ifcase #1
       \or\or \hskip 1em \or \hskip 2em \else \hskip 3em \fi%
      #6\nobreak\relax
    \dotfill\hbox to\@pnumwidth{\@tocpagenum{#7}}\par
    \nobreak
    \endgroup
  \fi}
\numberwithin{equation}{section}
\theoremstyle{plain}{}
\newtheorem{theorem}{Theorem}[section]
\newtheorem{fact}{Fact}
\newtheorem{coro}[theorem]{Corollary}
\theoremstyle{plain}
\newtheorem{prop}[theorem]{Proposition}
\newtheorem{lemma}[theorem]{Lemma}
\theoremstyle{remark}
\def\Corps{\mathfrak F}
\def\F{{F}}
\def\G{{\rm G}}
\def\I{{\rm I}}
\def\KK{{\rm  K}}
\def\K{\KK}
\def\N{{\mathbb N}}
\def\R{{\rm R}}
\def\T{{\rm T}}
\def\V{{\rm V}}
\def\W{{\rm W}}
\def\X{{\rm X}}
\def\Z{{\mathbb Z}}
\def\Ap{\mathscr{A}}
\def\Aa{\mathcal{A}}
\def\B{{\rm B}}
\def\U{{\rm U}}
\def\Dd{\EuScript{D}}
\def\Ff{\mathscr{F}}
\def\Hh{{ {\rm H}}}
\def\Oo{\mathfrak{O}}
\def\Pp{\EuScript{P}}
\def\k{{\mathbf k}}
\def\Iw{{\rm  I'}}
\def\Tp{{\rm T}}
\def\Gp{ { \rm G}}
\def\pr{{\rm  pr}}
\def\char{{\rm char}}
\newcommand{\cX}{\underline{\underline{\mathbf{X}}}}
\newcommand{\cV}{\underline{\underline{\mathbf{V}}}}
\def\lp{\langle}
\def\rp{\rangle}
\def\ind{{\rm ind}}
\def\Ind{{\rm Ind}}
\def\1{{\mathbf 1}}
\def\XX{{\mathbf X}}
\def\H{ {{\mathfrak H}}}
\def\Wf{\mathfrak W}
\def\val{\operatorname{\emph{val}}}
 \theoremstyle{remark}%
\definecolor{webblue}{rgb}{0, 0.7, 0.5}
\definecolor{webred}{rgb}{0.2, 0.3, 0.6} 
\title{Resolutions for principal series representations  of $p$-adic ${\rm GL}_n$}
\keywords{}
\subjclass{}
\dedicatory{\hspace{6cm}To Peter Schneider on the occasion of his birthday.\\}
\urladdr{\url{http://www.math.columbia.edu/~ollivier}}
\begin{document}

\maketitle

\begin{abstract}  Let $\mathfrak F$ be a nonarchimedean locally compact field with residue characteristic $p$ and $\mathbf G(\mathfrak F)$ the group of $\mathfrak F$-rational points of a connected reductive group.  In \cite{SS}, Schneider and Stuhler realize, in a functorial way, any
 smooth  complex  finitely generated representation of $\mathbf G(\mathfrak F)$ as the $0$-homology of a certain coefficient system  on the semi-simple building of $\mathbf G$.
It is known that this method does not apply  in general for  smooth mod $p$ representations of $\mathbf G(\mathfrak F)$,   even when $\mathbf G= {\rm GL}_2$. However, we prove that
a principal series representation of ${\rm GL}_n(\mathfrak F)$ over a field with arbitrary characteristic can be realized as the $0$-homology of  the corresponding coefficient system as defined in \cite{SS}.

\end{abstract}

\setcounter{tocdepth}{1}

\tableofcontents
\section{Introduction} Let $\mathfrak F$ be a nonarchimedean locally compact field with residue characteristic $p$ and $\mathbf G(\mathfrak F)$ the group of $\mathfrak F$-rational points of a connected reductive group $\mathbf G$.
By a result of Bernstein, the blocks of the category of smooth complex representations of  $\mathbf G(\mathfrak F)$ have  finite global dimension. The $\mathbf G(\mathfrak F)$-equivariant coefficient systems on the semisimple
building  $\mathscr X$ of $\mathbf G$ introduced in  \cite{SS} allow Schneider and Stuhler  to construct explicit projective resolutions  for finitely generated representations in this category.  One of the  key ingredients for their result is the following fact, which is valid over an arbitrary field $\k$:
consider the (level $0$) universal representation $\XX=\k[\I\backslash \mathbf G(\mathfrak F)]$ where $\I$ is a fixed pro-$p$ Iwahori subgroup of $\mathbf G(\mathfrak F)$, then the
attached coefficient system  $\cX$ on $\mathscr X$ gives the following  \emph{exact} augmented chain complex
\begin{equation}\label{chain-complexintro}
    0 \longrightarrow C_c^{or} (\mathscr{X}_{(d)}, \cX) \xrightarrow{\;\partial\;} \ldots \xrightarrow{\;\partial\;} C_c^{or} (\mathscr{X}_{(0)}, \cX) \xrightarrow{\;\epsilon\;} \mathbf{X} \longrightarrow 0
\end{equation}
of $\mathbf G(\mathfrak F)$-representations and of left $\Hh:=\k[\I\backslash \mathbf G(\mathfrak F)/\I]$-modules (see \S\ref{therez} below for the notation).

\medskip

If $\k$ has characteristic  $p$, it is no longer true  that the category of smooth representations  of $\mathbf G(\mathfrak F)$ generated by their pro-$p$ Iwahori fixed vectors has finite global dimension:  in the case of ${\rm PGL }_2(\mathbb Q_p)$,
this category is equivalent to the category of modules over  $\Hh$ (\cite{Inv})
and it is proved in \cite{OS} that the latter has infinite global dimension.

Still if $\k$ has characteristic  $p$, it is  also no longer true   that any $\mathbf G(\mathfrak F)$-representation $\mathbf V$ generated by its $\I$-invariant subspace 
can be realized as the $0$-homology of the  coefficient system $\cV$ defined  as in \cite{SS}: it is true for the universal representation $\mathbf{X}$ as noted above, but  \cite[Remark 3.2]{OS} points out  a counter-example when $\mathbf V$ is  a supercuspidal representation of ${\rm GL }_2(\mathbb Q_p)$.
However, 
 realizing  any smooth irreducible $\k$-representation of ${\rm GL }_2(\mathbb Q_p)$ 
as  the $0$-homology of some  finite dimensional coefficient system is important  in Colmez's construction  of  a functor yielding the $p$-adic  local Langlands correspondence     (\cite{Co}).  
As explained in \cite{OSe},  the resolutions  in \cite{Co} can be retrieved in the following   way:  let $\mathbf V$  be a smooth representation  of ${\rm GL }_2(\mathbb Q_p)$  with a central character and generated by its $\I$-invariant subspace  $\mathbf V^\I$, then  by the equivalence of categories of \cite{Inv}, tensoring \eqref{chain-complexintro} by the  $\Hh$-module $\mathbf V^\I$ gives an exact resolution of $\mathbf V\simeq \mathbf V^{\I}\otimes_\Hh \XX$. 
But the  equivalence of categories does not hold in general. For arbitrary $\mathfrak F$, Hu attaches    to any irreducible representation of  ${\rm GL }_2(\mathfrak F)$ with central character a coefficient system on the tree whose $0$-homology is isomorphic to $\mathbf V$ (\cite{Hudiag}).  This coefficient system, although not finite dimensional in general, turns out to be finite dimensional  when $\mathfrak F=\mathbb Q_p$  and one retrieves, once again, the resolutions of \cite{Co}. 
But if $\mathfrak F$ has positive characteristic (respectively  if $\mathfrak F/\mathbb Q_p$ is a  quadratic unramified extension), then  for $\mathbf V$ supercuspidal,  there  is no finite dimensional
 coefficient system whose $0$-homology is isomorphic to $\mathbf V$ as proved in  \cite{Hudiag}     (respectively   in \cite{Schraen}).

\medskip
Most of the surprising phenomena occurring in the smooth mod $p$ representation theory of $\mathbf G(\mathfrak F)$
are related to the properties of the supercuspidal representations, whereas the behavior of the principal series representations is easier to analyze and is somewhat similar to what is observed in the setting of complex representations. 
To formalize this remark, Peter Schneider asked me the following question: the Hecke algebra  $\Hh$  contains  a copy $\Aa_{anti}$ of the $\k$-algebra of the semigroup of all  (extended) antidominant cocharacters of a split torus of $\mathbf G$; is $\Hh$ 
 free as a $\Aa_{anti}$-module when localized at a regular character of $\Aa_{anti}$? 
 (see  \S\ref{lesAlg} and \S\ref{sec:over-a-ring}  for the definitions  and Propositions \ref{prop:iso} and \ref{prop:invariants} for the link between regular characters of $\Aa_{anti}$ and principal series representations). The answer  is yes    and  the present note is largely inspired by this 
question. We prove the following theorem, where $n$ is an integer $\geq 1$.

\begin{theorem}
\label{theo}
Let  $\k$ be an arbitrary field  and $\mathbf V$ a smooth principal series representation of ${\rm GL}_n(\mathfrak F)$ over $\k$. Let $\cV$ 
be the coefficient system associated to $\mathbf V$ as in \cite{SS}.  Then 
the following augmented chain complex
\begin{equation}\label{chain-complexintroV}
    0 \longrightarrow C_c^{or} (\mathscr{X}_{(n-1)}, \cV) \xrightarrow{\;\partial\;} \ldots \xrightarrow{\;\partial\;} C_c^{or} (\mathscr{X}_{(0)}, \cV) \xrightarrow{\;\epsilon\;} \mathbf{V} \longrightarrow 0
\end{equation} yields an exact resolution of $\mathbf V$ as a representation of ${\rm GL}_n(\mathfrak F)$.

\end{theorem}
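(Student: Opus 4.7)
My plan is to deduce exactness of \eqref{chain-complexintroV} from exactness of the universal complex \eqref{chain-complexintro} by a base-change argument controlled by the regular character of the antidominant subalgebra $\Aa_{anti}\subset\Hh$ attached to the principal series $\mathbf V$.

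The first step is to identify $\mathbf V^\I$ with an $\Hh$-module whose restriction to $\Aa_{anti}$ is a regular character $\chi$, using Propositions \ref{prop:iso} and \ref{prop:invariants}. Then, for every facet $F$ of the Bruhat--Tits building one wants to express the $U_F$-invariants $\mathbf V^{U_F}$ as a base change of $\XX^{U_F}$ along the map $\Hh\to\mathbf V^\I$, so that there is a term-by-term isomorphism $C_c^{or}(\mathscr{X}_{(i)},\cV)\simeq \mathbf V^\I\otimes_\Hh C_c^{or}(\mathscr{X}_{(i)},\cX)$ compatible with boundaries. For a principal series this should reduce to the standard description of parahoric invariants of a parabolically induced representation as an induction from the finite Hecke algebra of each parahoric, combined with the Bernstein decomposition of $\Hh$.

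The second and central step is to prove that each chain module $C_c^{or}(\mathscr{X}_{(i)},\cX)$ is flat, and in fact free, as a module over $\Aa_{anti}$ after localization at the regular character $\chi$. This is a concrete form of the question recalled in the introduction. For ${\rm GL}_n$ I would argue via the Iwahori--Matsumoto--Bernstein presentation of the pro-$p$ Iwahori Hecke algebra: $\Hh$ is free over its commutative Bernstein part on the basis indexed by the finite Weyl group $W_0$, and the inclusion $\Aa_{anti}\hookrightarrow\Aa$ becomes an isomorphism after inverting the positive cocharacters; each $C_c^{or}(\mathscr{X}_{(i)},\cX)$ is an induction from a parahoric and admits an analogous basis once the $W_0$-action is taken into account. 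Regularity of $\chi$ is used precisely to invert the required denominators.

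With these two ingredients the conclusion is formal: applying the functor ``localize at $\chi$, then specialize to the one-dimensional $\Aa_{anti,\chi}$-module $\chi$'' to the exact sequence \eqref{chain-complexintro} yields \eqref{chain-complexintroV}, the rightmost term being $\mathbf V$ because $\mathbf V$ is generated by $\mathbf V^\I$ and the specialization recovers $\mathbf V\simeq \mathbf V^\I\otimes_\Hh\XX$ under the regularity assumption. The main obstacle I anticipate is the flatness claim of the second step: over an arbitrary field $\k$ one cannot rely on the Bernstein--Lusztig denominator formulas available in characteristic zero, so a careful combinatorial analysis of the extended affine Weyl group of ${\rm GL}_n$ and of the orientation characters on the parahorics is required---the place where the restriction to ${\rm GL}_n$, rather than a general reductive group, should be essential.
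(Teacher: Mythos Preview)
Your overall strategy coincides with the paper's: tensor the exact universal complex \eqref{chain-complexintro} with the localization $R$ of $\Aa_{anti}$ at the regular character attached to $\chi$, show that every term of the resulting complex is a free $R$-module so that the sequence splits $R$-linearly, and then specialize along the residue map $R\to\k$ to obtain \eqref{chain-complexintroV}. The identification of the specialized coefficient system with $\cV$ is exactly Proposition~\ref{prop:free1}.

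The difference lies in how the freeness step is carried out. You propose to argue inside the Hecke algebra, via the Bernstein presentation and a combinatorial analysis of the affine Weyl group. The paper avoids this entirely: it forms the $R$-valued principal series $\Ind_\B^\G(\underline\xi)$ for the localization map $\xi:\Aa_{anti}\to R$ and proves (Proposition~\ref{prop:free1}) that the natural map $\xi\otimes_{\Aa_{anti}}\XX^{\I_F}\to(\Ind_\B^\G(\underline\xi))^{\I_F}$ is an isomorphism whenever $x_0\in\overline F$. Freeness over $R$ is then immediate from elementary Mackey theory (Lemma~\ref{rema:mackey}): the $\I_F$-invariants of a parabolic induction are free of rank $\vert\B\backslash\G/\I_F\vert$. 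This representation-theoretic route is shorter and needs no denominator formulas or characteristic-zero input.

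Correspondingly, the place where $\mathrm{GL}_n$ enters is not the Hecke-algebra combinatorics you anticipate but a purely geometric fact about the building: for $\mathrm{GL}_n$ one may choose each representative set $\mathscr F_i$ so that every facet contains the hyperspecial vertex $x_0$ in its closure, which is exactly the hypothesis of Proposition~\ref{prop:free1}. Your plan would work, but locating the use of $\mathrm{GL}_n$ in the Weyl combinatorics rather than in this facet condition would make the argument longer than necessary.
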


This theorem is proved in Section \ref{sec:reso}. In the previous sections, the arguments are written in the setting of a general split group. However, in Section \ref{sec:reso}, we need an extra geometric condition on the facets of the standard apartment to be able to fully use Iwasawa decomposition. Therefore, we restrict ourselves to the case of ${\rm GL}_n$. We suspect that Theorem
\ref{theo} is true in general.  

\medskip

A generalization of Colmez' functor  to reductive groups  over $\mathbb Q_p$
    is proposed by Schneider and Vign\' eras in \cite{SV}. The first fundamental construction is the one of   a universal $\delta$-functor $V\mapsto D^i( V)$ for $i\geq 0$, from the category $\mathcal M_{o-tor}(\B)$ to the category $\mathcal M_{{e}t}(\Lambda(\B^+))$. The ring $o$ is the ring of integers of a  fixed finite extension of $\mathbb Q_p$ and $\mathcal M_{o-tor}(\B)$ is the abelian category of smooth representations of $\B$ in $o$-torsion modules, where $\B$ is a fixed Borel subgroup in $\mathbf G(\mathbb Q_p)$. We do not describe the category  $\mathcal M_{{e}t}(\Lambda(\B^+))$ explicitly here. 
The restriction $V$ to $\B$ of a principal series representation of $\mathbf G(\mathbb Q_p)$ over a field with characteristic $p$ is an example of  an object in 
$\mathcal M_{o-tor}(\B)$ for a suitable ring $o$.  
Using  Theorem \ref{theo}, we prove the following result (\S\ref{sec:sv}):

\begin{prop} Suppose that $\k$ is a field with characteristic $p$. Let $V$ be the restriction to $\B$ of a  principal series  representation of ${\rm GL}_n(\mathbb Q_p)$ over $\k$. Then $D^i(V)=0$ for $i\geq n-1$.

\end{prop}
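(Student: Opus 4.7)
The plan is to apply the universal $\delta$-functor $V \mapsto D^\bullet(V)$ of \cite{SV} to the resolution supplied by Theorem \ref{theo}. Restricting the exact complex \eqref{chain-complexintroV} from ${\rm GL}_n(\mathbb Q_p)$ to $\B$ preserves exactness, yielding in $\mathcal M_{o-tor}(\B)$ a length-$(n-1)$ exact resolution of $V$ by the terms
\[
C_k \;:=\; C_c^{or}(\mathscr{X}_{(k)}, \cV)|_{\B}, \qquad 0 \leq k \leq n-1.
\]

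The first step is to show that each $C_k$ is $D^\bullet$-acyclic, i.e.\ $D^i(C_k) = 0$ for every $i \geq 1$. By the construction of \cite{SS}, each $C_c^{or}(\mathscr{X}_{(k)}, \cV)$ is a finite direct sum of ${\rm GL}_n(\mathbb Q_p)$-representations compactly induced from stabilizers of $k$-dimensional facets of the standard apartment (twisted by an orientation character). Combined with the Iwasawa decomposition ${\rm GL}_n(\mathbb Q_p) = \B \cdot K$ for a maximal compact $K$ that can be arranged to contain these facet stabilizers (the geometric input underlying Section \ref{sec:reso}), the restriction to $\B$ of each summand is itself compactly induced from an open compact subgroup of $\B$. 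Such representations lie in the acyclic class singled out by the construction of $D^\bullet$ in \cite{SV}.

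Granting this acyclicity, $D^i(V)$ computes as the $i$-th cohomology of the complex $D^0(C_\bullet)$, which is concentrated in cohomological degrees $0, 1, \ldots, n-1$. This immediately gives $D^i(V) = 0$ for $i \geq n$. For the boundary case $i = n-1$, I would invoke the left exactness of $D^0$: since the resolution is exact, the differential $C_{n-1} \hookrightarrow C_{n-2}$ is injective, so $D^0(C_{n-1}) \to D^0(C_{n-2})$ is also injective, and hence
\[
D^{n-1}(V) \;=\; \ker\!\bigl(D^0(C_{n-1}) \to D^0(C_{n-2})\bigr) \;=\; 0.
\]

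The main obstacle I foresee is the acyclicity step: verifying that the compactly induced $\B$-representations obtained by restricting the Schneider-Stuhler terms lie in the precise acyclic class singled out by $D^\bullet$. This requires an explicit analysis of the intersections of the facet stabilizers with $\B$ and a check that the resulting compact open subgroups of $\B$ are compatible with the $\B^+$-structure underlying the Schneider-Vigneras construction. The final cohomological count (length of the resolution plus one free degree gained from left exactness of $D^0$) is what delivers the sharper bound $i \geq n-1$ rather than the naive $i \geq n$.
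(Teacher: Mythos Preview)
Your overall strategy---restrict the resolution \eqref{chain-complexintroV} to $\B$, identify the terms as acyclic for the Schneider--Vign\'eras $\delta$-functor, and then read off $D^i(V)$ from the resulting complex---is exactly the paper's approach. But two of your concrete steps are wrong in ways that matter.

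First, the facet stabilizers $\Pp_F^\dagger$ are \emph{not} contained in any maximal compact $K$: they always contain the center $Z$, which is non-compact. Consequently, after Mackey decomposition the groups $w^{-1}\Pp_F^\dagger w \cap \B$ are not open compact subgroups of $\B$ either; they again contain $Z$. What the paper actually proves (Lemma~\ref{lemma:B0}) is that, for $F$ containing $x_0$ in its closure, $\Pp_F^\dagger \cap \B \subset \B_0 Z$, where $\B_0 = \B \cap \K$. This is the non-obvious geometric step. It yields a resolution of $V$ by representations of the form $\ind_{\B_0 Z}^\B(V_k)$, and it is precisely this shape (induction from $\B_0 Z$, not from an arbitrary compact open) that allows one to invoke the acyclicity argument of \cite[Lemma~11.8]{SV}, as generalized to ${\rm GL}_n$ by Z\`abr\`adi \cite[\S4]{Z}.

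Second, your argument for $D^{n-1}(V)=0$ has the variance backwards. The functor $D$ of \cite{SV} is \emph{contravariant}: applying it to the resolution gives a cochain complex $D(C_0)\to D(C_1)\to\cdots\to D(C_{n-1})\to 0$, and $D^{n-1}(V)$ is the cokernel of $D(\partial_{n-1}):D(C_{n-2})\to D(C_{n-1})$, not a kernel. The vanishing therefore comes from the surjectivity of $D(\partial_{n-1})$, which the paper obtains from \cite[Remark~2.4,~i]{SV} (a right-exactness property of $D$ on injections), not from left exactness.
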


\section{Notations and preliminaries}
\subsection{} From now on we suppose that 
$\mathbf{G}$ is $\mathfrak{F}$-split and we denote ${\mathbf  G}(\mathfrak{F})$ by $\G$. 
We  fix a uniformizer $\varpi$  for $\mathfrak F$ and choose the valuation $\val_{\mathfrak{F}}$ on   $\mathfrak{F}$  normalized by $\val_{\mathfrak{F}}(\varpi)=1$. The  ring of integers of $\Corps$ is denoted by $\Oo$ and its residue field   with cardinality $q= p^f$ by $\mathbb F_q$.

 In  the semisimple  building $\mathscr {X}$ of $\Gp$, we choose the chamber $C$
 corresponding to the Iwahori subgroup $\Iw$ that contains the  pro-$p$  subgroup  $\I$.  This choice is unique up to conjugacy by an element of $\Gp$. 
Since $\G$ is split, $C$ has at least one  hyperspecial vertex    $x_0$ and we denote by $\K$ the  associated maximal compact subgroup of $\Gp$.
We fix a maximal $\Corps$-split torus $\T$ in $\Gp$ such that
the corresponding  apartment $\Ap$ in $\mathscr{X}$ contains $C$.

Let $\mathbf{G}_{x_0}$ and $\mathbf{G}_C$ denote the Bruhat-Tits group schemes over $\mathfrak{O}$ whose $\mathfrak{O}$-valued points are $\K$ and $\Iw$ respectively. 
   Their reductions over the residue field $\mathbb{F}_q$ are denoted by $\overline{\mathbf{G}}_{x_0}$ and $\overline{\mathbf{G}}_C$. 
   Note that $\Gp= \mathbf{G}_{x_0}(\Corps)=\mathbf{G}_{C}(\Corps)$.
   By \cite[3.4.2, 3.7 and 3.8]{Tit}, %$\overline{\mathbf{G}}_{x_0}$ is connected reductive and $\mathbb{F}_q$-split and  $\overline{\mathbf{G}}_C$ is connected. Hence 
$\overline{\mathbf{G}}_{x_0}$ is connected  reductive and $\mathbb F_q$-split.
Therefore we have ${\mathbf{G}}_C^\circ(\Oo)={\mathbf{G}}_C(\Oo) = \Iw$ and ${\mathbf{G}}_{x_0}^\circ(\Oo)={\mathbf{G}}_{x_0}(\Oo) = \K$.   Denote    by $\K_1$ the pro-unipotent radical of $\K$.   
More generally we consider the fundamental system of open normal subgroups
\begin{equation*}
    \K_m := \ker \big(\mathbf{G}_{x_0}(\Oo) \xrightarrow{\; \pr \;} \mathbf{G}_{x_0} (\Oo/\varpi^m \Oo) \big) \qquad\textrm{for $m \geq 1$}
\end{equation*}
of $\K$.
The quotient $\K/\K_1$ is isomorphic to  $\overline{\mathbf G}_{x_0}(\mathbb F_q)$. 
The Iwahori subgroup $\Iw$  is the preimage in $\K$ of the $\mathbb F_q$-rational points of  a Borel subgroup $\overline{\mathbf B}$
with Levi decomposition $\overline{\mathbf B}= \overline{\mathbf T}\,\overline{\mathbf N}$. 
The pro-$p$ Iwahori subgroup $\I$ is the preimage in $\Iw$ of $\overline{\mathbf N}(\mathbb F_q)$. The preimage of  $ \overline{\mathbf{T}}(\mathbb F_q)$ is the the maximal compact subgroup $\Tp^0$ of $\Tp$.
Note that $\T^0/\T^1=\Iw/\I =  \overline{\mathbf{T}}(\mathbb F_q)$ where  $\T^1:= \T^0\cap\I$.

\medskip

 To the choice of $\T$ is attached  the root datum $(\Phi, {\rm X}^*(\Tp), \check\Phi, {\rm X}_*(\Tp))$.
This root system is reduced because  the group $\mathbf{G}$ is $\mathfrak F$-split. We denote by $\Wf$ the finite Weyl group  $N_\G(\T)/\T$, quotient by $\T$ of the normalizer of $\T$.   Let $\lp\,. \,,\, .\,\rp$ denote the perfect pairing ${\rm X}_*(\Tp)\times {\rm X}^*(\Tp)\rightarrow \Z$.
The elements in $\X_*(\T)$ are the cocharacters of $\T$ and we will call them the  coweights.
We identify the set  $\X_*(\T)$ with the subgroup  $\T/\T^0$ of the extended Weyl group $\W= N_\G(\T)/\T^0$ as in \cite[I.1]{Tit} and \cite[I.1]{SS}: to an element $g\in \Tp$ corresponds a vector $\nu(g)\in \mathbb R\otimes _{\mathbb Z}\X_*({\Tp})$ defined by
\begin{equation}\label{normalization}
    \lp\nu(g),\, \chi\rp =-\val_{\mathfrak F}(\chi(g))  \qquad \textrm{for any } \chi\in \X^*(\Tp).
\end{equation} and $\nu$ induces the required isomorphism   $\Tp/\T^0\cong\X_*(\Tp)$. Recall that  $\Ap$ denotes the apartment of the semisimple building attached to  $\Tp$ (\cite{Tit} and \cite[I.1]{SS}).  The group $\Tp/\T^0$ acts by translation on $\Ap$ via $\nu$.  
The actions of $\Wf$ and $\Tp/\T^0$ combine into an action of 
 $\W$  on $\Ap$ as recalled in \cite[page 102]{SS}. Since $x_0$ is a special vertex of the building,   $\W$  is isomorphic to the semidirect product $\X_*(\T)\rtimes \Wf$  where we see $\mathfrak W$ as the fixator in $\W$ of any point in the extended apartment lifting $x_0$ (\cite[1.9]{Tit}).
A coweight $\lambda$ will sometimes be denoted by $e^\lambda$ to underline that we see it as an element in $\W$, meaning as a translation on the semisimple apartment $\Ap$. 

\medskip

 We see the roots $\Phi$ as the set of affine roots taking value zero at $x_0$. Then $\Phi^+$ is  the set of roots in $\Phi$ taking non negative values on $C$.
  The set of dominant coweights  $\X_*^+(\Tp)$ is the set of all $\lambda\in \X_*(\Tp)$ such that $\lp \lambda, \alpha\rp\geq 0$ for all $\alpha\in \Phi^+$.  A  coweight is called antidominant if its opposite is dominant.
A coweight $\lambda$ such that  $\lp \lambda, \alpha\rp< 0$ for all $\alpha\in \Phi^+$ is called strongly antidominant.

 \medskip
 
\subsection{}We fix a lift $\hat w\in N_\G(\T)$ for any $w\in  \W$. By Bruhat decomposition,  $\Gp$ is the disjoint union of all $\Iw \hat w \Iw$ for $w\in\W$. Recall that $\Tp^1$ is the pro-$p$ Sylow subgroup of $\T^0$.
We denote by $\tilde\W$ the quotient of $N_\Gp(\Tp)$   by $\Tp^1$  and obtain the exact sequence
$$0\rightarrow \Tp^0/\Tp^1 \rightarrow \tilde\W\rightarrow \W\rightarrow 0.$$ 
The group $\tilde\W$  parametrizes the double cosets of $\Gp$ modulo $\I$.  We fix a lift $\hat w\in N_\G(\T)$ for any $w\in \tilde \W$.  For $Y$ a subset of $\W$, we denote by $\tilde Y$ its preimage in $\tilde \W$. In particular, we have   the preimage $\tilde \X_*(\T)$  of $\X_*(\T)$. As well as those of $\X_*(\T)$, its elements will be denoted by $\lambda$ or $e^\lambda$ and called coweights. 
Note that a system of representatives of $\T/\T^1$ is given by the set of all $\widehat {e^\lambda}$ for $\lambda\in \tilde \X_*(\T)$. In fact, we recall that the  map
\begin{equation}\label{splitting}\lambda\in \X_*(\T)\rightarrow [ \lambda(\varpi^{-1})\,{\rm mod} \,\T^1 ]\in \tilde \X_*(\T)\end{equation}
  is a $\Wf$-equivariant splitting for   the exact sequence of abelian groups
\begin{equation}\label{eq:split}0\longrightarrow \T^0/\T^1\longrightarrow \tilde\X_*(\T) \longrightarrow \X_*(\T)\longrightarrow 0.\end{equation}
We will identify $\X_*(\T)$ with its image in $\tilde \X_*(\T)$ \emph{via} \eqref{splitting}.

\medskip

For $\alpha\in \Phi$, we inflate the function $\lp \,.\, , \alpha\rp$ defined on $\X_*(\T)$ to 
$\tilde \X_*(\T)$.   We still call \emph{dominant coweights} (resp.  \emph{antidominant coweights}) the elements in the preimage  $\tilde \X_*^+(\T)$  (resp. $\tilde \X_*^-(\T)$) of $\X_*^+(\T)$ (resp. $\X_*^-(\T)$).

\medskip

The group $\tilde \W$ is equipped with a length function $\ell: \tilde \W\rightarrow \N$ that inflates the length function on $\W$ (\cite[Proposition 1]{Vig}).

\subsection{\label{lesAlg}} Let $\k$ be an arbitrary field.
We consider the pro-$p$ Iwahori-Hecke algebra $$\Hh=\k[\I\backslash \Gp/\I]$$
  of  $\k$-valued functions with compact support in $\I\backslash \Gp
/\I$ under convolution.
 For $w\in \tilde\W$,  denote by $\tau_w$ the characteristic function of the double coset $\I \hat w \I$.  The set of all $(\tau_w)_{w\in \tilde \W}$ is a $\k$-basis for $\Hh$.
For $g\in \Gp$, we will also use the notation $\tau_g$ for the characteristic function of the double coset $\I g \I$. 
 In $\Hh$ we have the following relation, for $w$, $w'$ in $ \tilde \W$ (\cite[Theorem 50]{Vig}):
\begin{equation}\tau_{w} \tau_{w'}=\tau_{ ww'} \textrm{ if $\ell(w)+ \ell(w')=\ell(ww')$. }\end{equation}

It implies in particular that in $\Hh$ we have, for $\lambda$ and $\lambda'$ in $\tilde \X_*(\T)$:
\begin{equation}\label{eq:add}\tau_{e^{\lambda}} \tau_{e^{\lambda'}}=\tau_{ e^{\lambda+\lambda'}} \textrm{ if $\lambda$ and $\lambda'$ are   both antidominant.}\end{equation}
We denote by $\Aa_{anti}$  the commutative sub-$\k$-algebra of  $\Hh$ with $\k$-basis the set of all $\{\tau_{e^\lambda}, \: \lambda\in \tilde \X^-_*(\T)\}$.

\subsection{} 
Let $\U$ be the unipotent subgroup  of $\G$ generated by all the root subgroups  $\U_\alpha$ for $\alpha\in \Phi^+$ and $\B$ the Borel subgroup with Levi decomposition 
 $\B= \T\U$. Recall that we have $\G=\B \K$  since $x_0$ is a special vertex. Furthermore,  $\B\cap \K=\I'\cap \B$.

Let $\U^-$ denote the opposite unipotent subgroup   of $\G$ generated by all the root subgroups $\U_\alpha$ for $-\alpha\in \Phi^+$.
The pro-$p$ Iwahori subgroup $\I$ has the following decomposition:
$$\I= \: \I^+\:\I^0\:\I^-\textrm{ where  $\I^+:=\I\cap {\rm U}$,  $\I^0:=\I\cap \T=\T^1$, $\I^-:=\I\cap {\rm U}^-$}.$$

An element  $t\in \T$  contracts $\I^+$ and dilates $\I^-$ if it satisfies the conditions (see  \cite[(6.5)]{BK}):
\begin{equation} \label{cd}t \:\I^+ t^{-1}\subseteq \I^+, \qquad t^{-1} \I^- t\subseteq \I^-. \end{equation} Denote by $\T^{++}$ the semigroup of such $t\in\T$.

\begin{lemma}  We have $\T^{++}=\coprod_{\lambda\in \tilde \X_*^-(\T)} \T^1 \widehat{e^{\lambda}}$.

\label{negative}
\end{lemma}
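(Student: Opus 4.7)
The plan is to reduce the semigroup condition \eqref{cd} to a numerical inequality on the image of $t$ in $\X_*(\T)$, and then lift the result along the splitting \eqref{splitting}.

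First I would use the standard product decompositions
\begin{equation*}
    \I^+=\prod_{\alpha\in\Phi^+}\U_\alpha(\Oo),\qquad \I^-=\prod_{\alpha\in\Phi^+}\U_{-\alpha}(\varpi\Oo),
\end{equation*}
whose factors are each normalized by $\T$. Conjugation by $t\in \T$ acts on $\U_\alpha$ through the scalar $\alpha(t)\in\Corps^\times$, so $t\,\U_\alpha(\Oo)\,t^{-1}=\U_\alpha(\alpha(t)\Oo)$ and $t^{-1}\U_{-\alpha}(\varpi\Oo)\,t=\U_{-\alpha}(\alpha(t)\varpi\Oo)$. Both containments in \eqref{cd} therefore hold simultaneously if and only if $\val_{\Corps}(\alpha(t))\geq 0$ for every $\alpha\in\Phi^+$.

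Next, using the normalization \eqref{normalization} which gives $\val_{\Corps}(\alpha(t))=-\lp\nu(t),\alpha\rp$, the preceding condition becomes $\lp\nu(t),\alpha\rp\leq 0$ for all $\alpha\in\Phi^+$, i.e.\ the image $\nu(t)\in\T/\T^0\cong\X_*(\T)$ is antidominant. Thus $\T^{++}$ is exactly the preimage of $\X_*^-(\T)$ under $\T\twoheadrightarrow\T/\T^0$.

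Finally, the splitting \eqref{splitting} of \eqref{eq:split} together with the lifts $\widehat{e^\mu}$ provides a bijection $\tilde\X_*(\T)\xrightarrow{\sim}\T/\T^1$, $\mu\mapsto \widehat{e^\mu}\,\T^1$, compatible with the projection to $\X_*(\T)$. The preimage of $\X_*^-(\T)$ inside $\tilde\X_*(\T)$ is by definition $\tilde\X_*^-(\T)$, so combining with the previous step,
\begin{equation*}
    \T^{++}=\coprod_{\lambda\in\tilde\X_*^-(\T)}\T^1\,\widehat{e^\lambda},
\end{equation*}
the disjointness being automatic from the above bijection. The only step that carries any real content is the first one, namely identifying $\I\cap \U_{\pm\alpha}$ through the root-group decomposition of $\I^\pm$; the remainder is a routine translation across \eqref{splitting} and \eqref{eq:split}.
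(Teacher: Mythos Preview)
Your argument is correct. The paper's own proof is not really a proof at all: it simply cites \cite[Lemma 5.20]{Compa} for the equivalence ``$\widehat{e^\lambda}$ satisfies \eqref{cd} $\Leftrightarrow$ $\lambda$ antidominant'' and says nothing more. What you have written is a self-contained version of exactly that equivalence, carried out via the root-group factorizations of $\I^\pm$ and the formula $t\,u_\alpha(x)\,t^{-1}=u_\alpha(\alpha(t)x)$. So the underlying mechanism is the same, but your write-up actually does the work rather than outsourcing it. One small remark: the two containments in \eqref{cd} are in fact individually equivalent to $\val_{\Corps}(\alpha(t))\geq 0$ for all $\alpha\in\Phi^+$ (not just jointly), which is consistent with what you wrote but slightly sharper. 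Your final paragraph, passing from $\T/\T^0$ to $\T/\T^1$ via the definition of $\tilde\X_*^-(\T)$ as the preimage of $\X_*^-(\T)$, is exactly the translation the paper leaves implicit.
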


\begin{proof} 
Let $\lambda\in \tilde\X_*(\T)$. It is proved in  \cite[Lemma 5.20]{Compa}
that the element $\widehat {e^\lambda}$ satisfies \eqref{cd} if and only if $\lambda$ is antidominant.

%r the second one,  use the decomposition   of $\I^-$ recalled for example in \cite[(5.6)]{Compa}.

\end{proof}

\subsection{\label{remaANT}} We consider the $\k$-basis  $(E(w))_{w\in \tilde \W}$ for $\Hh$ as introduced in \cite{Vig}. Recall that $E(e^\lambda)=\tau_{e^\lambda}$ for all $\lambda\in\tilde \X_*^-(\T)$.
For $w\in \tilde\W$, there is  
$\lambda_0\in \tilde \X_*(\T)$ and $w_0\in \tilde\Wf$  such that 
$w= e^{\lambda_0} w_0$. 
Let $\lambda\in \tilde \X_*^-(\T)$ such that $\lambda+\lambda_0\in \tilde \X_*^-(\T)$. We claim that
\begin{equation}\label{mult}\tau_{e^\lambda} \: E(w)= q^{(\ell(w)+\ell(e^\lambda)-\ell(e^\lambda w))/2} E(e^{\lambda_0+\lambda})\tau_{w_0}\in \Aa_{anti}\:\tau_{w_0}.\end{equation}
The proof of this equality   given in the case of ${\rm GL}_n$
 in \cite[Proposition 4.8]{ANT} works in the general case with no modification.

\subsection{\label{strongl}} Let $t\in \T$ such that  the double class $\I \, t\, \I$ 
corresponds to a strongly antidominant element in $\X_*^-(\T)$.
The following lemma proved in \cite[Proposition 8, p.78]{SS1} is valid in the case of a general split reductive group. 

\begin{lemma}\label{lemmaSS} An open compact  subset  of $\B\backslash \Gp $   decomposes into a  finite disjoint union  of subsets of the form
 $\B \I t^n k= \B \I^- t^n k$ for $n$ large enough,
 where $k$ ranges over a finite subset of $\K$.
\end{lemma}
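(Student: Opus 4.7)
The plan is to exhibit, for each large enough $n$, a finite clopen partition $\mathcal P_n$ of the compact space $\B\backslash\G$ by sets of the form $\B\I^- t^n k$, and then realize $\Omega$ as a finite union of parts of this partition.

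First I would justify the rewriting $\B\I t^n k = \B\I^- t^n k = \B H_n k$, where $H_n := t^{-n}\I^- t^n$. The Iwahori decomposition $\I = \I^+\I^0\I^-$ together with $\I^+\I^0\subseteq\B$ gives the first equality, and absorbing $t^n\in\T\subseteq\B$ on the left gives the second. By Lemma \ref{negative}, $t\in\T^{++}$, so the $H_n$ form a decreasing sequence of open compact subgroups of $\U^-$; strong antidominance of $t$ then makes $\{H_n\}_{n\geq 0}$ a fundamental system of neighborhoods of $1$ in $\U^-$.

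Next, the Iwasawa decomposition $\G = \B\K$ (with $\B\cap\K = \B\cap\I'$) identifies $\B\backslash\G$ with the compact profinite space $(\B\cap\K)\backslash\K$, and each $\B H_n k$ is clopen there (its preimage $(\B\cap\K)H_n k$ being open compact in $\K$). For fixed $n$, I would verify that $\{\B H_n k : k\in\K\}$ is actually a partition of $\B\backslash\G$: if $\B H_n k \cap \B H_n k'$ is nonempty, a common point yields $hk = \beta h' k'$ for some $h,h'\in H_n$ and $\beta\in\B$, whence $k(k')^{-1} = h^{-1}\beta h'\in H_n\B H_n$, which is precisely the condition forcing $\B H_n k = \B H_n k'$. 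Since any partition of a compact space into open subsets is finite, $\mathcal P_n$ has finitely many parts; because $H_{n+1}\subseteq H_n$ the partitions refine as $n$ grows, and combined with $\bigcap_n H_n = \{1\}$ this shows that $\{\B H_n k : n\geq 0,\, k\in\K\}$ is a basis of the topology on $\B\backslash\G$.

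Given now an open compact (hence clopen) $\Omega$, the basis property produces, for each $\bar g\in\B\backslash\G$, an integer $n(\bar g)$ for which the $\mathcal P_{n(\bar g)}$-cell of $\bar g$ lies entirely in $\Omega$ or entirely in $\Omega^c$. Compactness of $\B\backslash\G$ yields a single $n$ at which every cell of $\mathcal P_n$ is contained in $\Omega$ or in $\Omega^c$, and $\Omega$ is then the finite disjoint union of those cells of $\mathcal P_n$ that it contains, each of the required form $\B\I t^n k = \B\I^- t^n k$. The main obstacle is the verification of the partition property, which rests on the careful bookkeeping with $H_n\B H_n$-double cosets and on the semigroup property $t\in\T^{++}$; once that is in place, the compactness argument passing from the pointwise $n(\bar g)$ to a uniform $n$ is routine.
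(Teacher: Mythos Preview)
The paper does not supply its own argument here; it simply records that the result from \cite[Proposition~8, p.~78]{SS1} (stated there for $\mathrm{GL}_n$) carries over to a general split group. Your overall strategy---exhibiting the sets $\B\I t^n k=\B H_n k$ as a refining sequence of finite clopen partitions of the compact space $\B\backslash\G\cong\B_0\backslash\K$ and then invoking compactness---is exactly the natural one and matches what one finds in \cite{SS1}.

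There is, however, a real gap at the step you yourself flag as ``the main obstacle''. From $hk=\beta h'k'$ you obtain $k(k')^{-1}=h^{-1}\beta h'\in H_n\B H_n$ and assert that this forces $\B H_n k=\B H_n k'$. That implication would require $\B H_n\beta=\B H_n$, i.e.\ $H_n\beta\subseteq\B H_n$, and this fails for general $\beta\in\B$: in $\mathrm{GL}_2$ take $\beta=\mathrm{diag}(\varpi^{-1},1)$ and $h\in H_n$ with $\val(h_{21})=n+1$; the $\U^-$-part of $h\beta$ then lands only in $H_{n-1}$. The fix is twofold. First, since $k,k'\in\K$ and $H_n\subseteq\K$, one has $\beta=h\,k(k')^{-1}(h')^{-1}\in\B\cap\K=\B_0$, not merely $\beta\in\B$. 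Second, one must know that $\B_0H_n$ is a \emph{subgroup} of $\K$: this follows by checking, via uniqueness of the Iwahori factorization in the big cell, that $\I\cap t^{-n}\I t^n=\I^+\T^1H_n$, whence $\B_0H_n=\T^0\cdot(\I\cap t^{-n}\I t^n)$ is a group (as $\T^0$ normalizes both $\I$ and $t^{-n}\I t^n$). With this in hand, the sets $\B H_n k$ correspond under $\B\backslash\G\cong\B_0\backslash\K$ to right cosets of $\B_0H_n$, and the partition property---as well as the refinement and basis claims---become immediate. The remainder of your argument is then correct as written.
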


\begin{lemma} A system of neighborhoods of the identity in $\U^-$ is given by the set of all $t^{-m} \I^- t^m$ for $m\in\N$.\label{lemmaSS'}
\end{lemma}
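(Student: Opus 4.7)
The plan is to show that the family $K_m := t^{-m}\I^- t^m$, for $m \in \N$, is a decreasing sequence of open compact subgroups of $\U^-$ whose intersection is trivial. A standard compactness argument will then yield the claim that $(K_m)_{m\in\N}$ is a neighborhood basis of the identity.

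First I would observe that since the double class $\I\, t\, \I$ corresponds by hypothesis to a strongly antidominant element of $\tilde\X_*(\T)$, Lemma \ref{negative} places $t$ in $\T^{++}$. The second inclusion in \eqref{cd} then gives $t^{-1}\I^- t \subseteq \I^-$, and iterating yields $K_{m+1} \subseteq K_m$. Each $K_m$ is moreover open and compact in $\U^-$: the group $\T$ normalizes $\U^-$, $\I^-$ is open and compact in $\U^-$, and conjugation by $t^m$ is a homeomorphism.

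Next I would pin down the intersection by working one root subgroup at a time. Using the product decomposition $\I^- = \prod_{\alpha \in \Phi^-}(\I \cap \U_\alpha)$ from Bruhat--Tits theory, together with isomorphisms $u_\alpha : \Corps \xrightarrow{\sim} \U_\alpha$ under which $\I \cap \U_\alpha = u_\alpha(\varpi^{k_\alpha}\Oo)$ for suitable integers $k_\alpha$, I would combine the adjoint formula $t\, u_\alpha(x)\, t^{-1} = u_\alpha(\alpha(t)\, x)$ with the conventions \eqref{normalization} and \eqref{splitting}: writing $t = \widehat{e^\lambda}$ with $\lambda \in \tilde \X_*(\T)$ strongly antidominant, one reads off $\alpha(t) = \varpi^{-\lp\lambda,\alpha\rp}$. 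Strong antidominance forces $\lp\lambda,\alpha\rp > 0$ for every $\alpha \in \Phi^-$, so that
\[
K_m \cap \U_\alpha \;=\; u_\alpha\!\left(\varpi^{\,k_\alpha + m\lp\lambda,\alpha\rp}\,\Oo\right),
\]
which shrinks to $\{1\}$ as $m \to \infty$. Reassembling the root-subgroup pieces via the product decomposition gives $\bigcap_m K_m = \{1\}$.

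Finally, I would deduce the lemma from a compactness argument: for any open neighborhood $V$ of $1$ in $\U^-$, the complements $K_m \setminus V$ form a decreasing family of closed subsets of the compact set $K_0 = \I^-$ with empty intersection, so some $K_m$ must be contained in $V$. The main subtlety is the bookkeeping of signs---one must carefully align the normalization \eqref{normalization}, the splitting \eqref{splitting} sending $\lambda$ to $\lambda(\varpi^{-1})\,\mathrm{mod}\,\T^1$, and the adjoint action of $\T$ on each $\U_\alpha$, so that the inequality $\lp\lambda,\alpha\rp > 0$ for $\alpha \in \Phi^-$ indeed falls out of strong antidominance. Once these conventions are reconciled, the statement reduces to the standard contraction dynamics of $\T^{++}$ on $\U^-$.
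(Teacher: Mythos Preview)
Your proof is correct, but it takes a different route from the paper. The paper dispatches the lemma in one line by invoking the congruence filtration: it observes that the subgroups $\K_m\cap\U^-$ already form a neighborhood basis of the identity in $\U^-$, and then simply checks the inclusion $t^{-m}\I^- t^m \subseteq \K_{m+1}\cap\U^-$ for all $m$. Since each $t^{-m}\I^- t^m$ is evidently open, this inclusion immediately forces the family to be cofinal and hence a basis.

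Your argument instead works intrinsically inside $\U^-$: you establish that the $K_m$ are nested open compact subgroups, compute root-by-root that their intersection is trivial, and then extract the basis property from a compactness argument. This is more self-contained --- it does not appeal to the ambient filtration $\K_m$ of $\K$ --- and it makes the contraction dynamics of $t$ on each $\U_\alpha$ completely explicit. The trade-off is length and the sign bookkeeping you flag at the end. The paper's approach is shorter precisely because the inclusion $t^{-m}\I^- t^m \subseteq \K_{m+1}\cap\U^-$ packages all of that root-subgroup contraction into a single containment, offloading the work onto the standard properties of the congruence filtration.
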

\begin{proof}  A system of neighborhoods of the identity in $\U^-$ is given by the set of all $\K_m\cap \U^-$
and one checks that $t^{-m} \I^- t^m\subseteq \K_{m+1}\cap \U^-$ for all $m\in\N$.

\end{proof}
\section{Resolution of the level $0$ universal representation  of $\mathbf G(\mathfrak F)$\label{therez}}

We gather  here results from \cite{SS} and  use the  notations of \cite{OS}.
 We recall (cf.\ \cite{SS} I.1-2 for a brief overview) that the semi-simple building $\mathscr{X}$ is (the topological realization of) a $\Gp$-equivariant polysimplicial complex of dimension equal to the semisimple rank $d$ of $\Gp$. The (open) polysimplices are called facets and the $d$-dimensional, resp.\ zero dimensional, facets chambers, resp.\ vertices.
 For $i\in\{0, ..., d\}$, we denote by $\mathscr X_{(i)}$ the set of oriented facets of dimension $i$.
  Associated with each facet $F$ is, in a $\Gp$-equivariant way, a smooth affine $\mathfrak{O}$-group scheme $\mathbf{G}_F$ whose general fiber is $\mathbf{G}$ and such that $\mathbf{G}_F(\mathfrak{O})$ is the pointwise stabilizer in $\Gp$ of the preimage of $F$ in the extended building of $\Gp$. Its neutral component is denoted by $\mathbf{G}_F^\circ$ so that the reduction $\overline{\mathbf{G}}_F^\circ$ over $\mathbb{F}_q$ is a connected smooth algebraic group. The subgroup $\mathbf{G}_F^\circ(\mathfrak{O})$ of $\Gp$ is compact open. Let
\begin{equation*}
    \I_F := \{ g \in \mathbf{G}_F^\circ(\mathfrak{O}) :( g \mod \varpi) \in\ \textrm{unipotent radical of $\overline{\mathbf{G}}_F^\circ$} \}.
\end{equation*}

The $\I_F$ are compact open pro-$p$ subgroups in $\Gp$ which satisfy $\I_C = \I$, $\I_{x_0} = \K_1$,
\begin{equation}\label{pr1}
    g\I_F g^{-1} = \I_{gF} \qquad\textrm{for any $g \in \Gp$},
\end{equation}
and
\begin{equation}\label{pr2}
    \I_{F'} \subseteq \I_F \qquad\textrm{whenever $F' \subseteq \overline{F}$}.
\end{equation}

For any smooth $\k$-representation $\mathbf V$ of $\Gp$,  the family $\{\mathbf{V}^{\I_F}\}_F$ of subspaces of $\I_F$-fixed vectors in $\mathbf{V}$ forms a $\Gp$-equivariant coefficient system  on $\mathscr{X}$  which we will denote by $\cV$ (\cite{SS} II.2). Let $\mathbf X$ be the space $\k[\I\backslash\G]$ of $\k$-valued  functions with finite support in $\I\backslash\G$. It is a natural  left $\Hh$-module.
Let $\cX$ be the associated coefficient system. The corresponding augmented oriented chain complex
\begin{equation}\label{chain-complex}
    0 \longrightarrow C_c^{or} (\mathscr{X}_{(d)}, \cX) \xrightarrow{\;\partial\;} \ldots \xrightarrow{\;\partial\;} C_c^{or} (\mathscr{X}_{(0)}, \cX) \xrightarrow{\;\epsilon\;} \mathbf{X} \longrightarrow 0
\end{equation}
is a complex of $\Gp$-representations and of left $\Hh$-modules.

%\begin{itemize}
%\item[1.]

\medskip

As noticed in \cite[Remark 3.2]{OS}, the following result is contained in the proof of  \cite[Theorem II.3.1]{SS}:

\begin{theorem}[\cite{SS} Thm.\ II.3.1]
The complex \eqref{chain-complex} is exact.

\end{theorem}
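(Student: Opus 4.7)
The strategy, following \cite{SS}, is to exploit the permutation-module structure $\mathbf{X}=c\text{-ind}_\I^\Gp\k$ in order to decompose the augmented complex \eqref{chain-complex} into pieces, each of which is the augmented oriented $\k$-chain complex of a contractible closed polysimplex.

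I begin by fixing the basis $\{e_{g\I}\}_{g\I\in\Gp/\I}$ of $\mathbf{X}$, on which $\Gp$ acts by left translation. By \eqref{pr1} the $\Gp$-stabilizer of $e_{g\I}$ is $g\I g^{-1}=\I_{gC}$. Combining this with \eqref{pr2} and its Bruhat--Tits-theoretic converse --- namely $\I_{F'}\subseteq \I_F$ if and only if $F'\subseteq \overline F$, provable via the standard description of $\I_F$ as generated by $\T^1$ and the affine root subgroups strictly positive on $F$ --- one sees that $e_{g\I}\in \mathbf{X}^{\I_F}$ if and only if $F$ is a face of the chamber $gC$. More generally, Mackey's formula equips $\mathbf{X}^{\I_F}$ with a $\k$-basis indexed by the (finite) $\I_F$-orbits on $\Gp/\I$: the singleton orbits are exactly the $e_{g\I}$ with $F\subseteq \overline{gC}$, while non-singleton orbits yield additional fixed vectors of the form $\sum_{h\I\in O}e_{h\I}$.

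The core step is to re-index $C_c^{or}(\mathscr{X}_{(*)},\cX)$ so that, for each fixed $g\I\in\Gp/\I$, the singleton-orbit contributions attached to $e_{g\I}$ assemble into a subcomplex canonically isomorphic to the oriented $\k$-chain complex of $\overline{gC}$, augmented by $\k\cdot e_{g\I}$. The non-singleton contributions must analogously be organized into exact subcomplexes compatible with the differentials $\partial$. Granting this packaging, each resulting summand is the augmented oriented chain complex of the closed polysimplex $\overline{gC}$, which is topologically contractible, hence acyclic; summing over $g\I\in\Gp/\I$ then yields the exactness of \eqref{chain-complex}.

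The main obstacle is precisely this orbit-wise bookkeeping: one must verify that the non-singleton $\I_F$-orbit contributions assemble into exact subcomplexes compatible with $\partial$, and that together with the singleton ones they exhaust $C_c^{or}(\mathscr{X}_{(*)},\cX)$. This rests on a careful Bruhat--Tits analysis of how the pro-$p$ parahorics $\I_F$ act on $\Gp/\I$ and of how these actions interlace across the inclusions $\I_{F'}\subseteq\I_F$ for $F'\subseteq\overline F$. Once that combinatorial input is in place, the purely geometric content --- contractibility of the closed polysimplex $\overline{gC}$ --- delivers the claimed exactness.
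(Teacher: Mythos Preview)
The paper gives no proof of its own here: it simply records that the exactness of \eqref{chain-complex} is contained in the proof of \cite[Thm.~II.3.1]{SS}, pointing to \cite[Remark~3.2]{OS} for the observation that the relevant portion of that argument needs no hypothesis on the characteristic of $\k$. So there is nothing in the present paper to compare against beyond that citation.

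Your plan, however, has a genuine gap, and it is precisely the one you yourself flag as ``the main obstacle''---but it is structural, not mere bookkeeping. The splitting of each $\mathbf{X}^{\I_F}$ into its singleton and non-singleton $\I_F$-orbit pieces does \emph{not} produce a direct-sum decomposition of the chain complex. The singleton span is indeed a $\partial$-stable subcomplex (if $\I_F\subseteq g\I g^{-1}$ then also $\I_{F'}\subseteq g\I g^{-1}$ for every face $F'\subseteq\overline F$), and that augmented subcomplex is exact for the reason you state. But the non-singleton span is \emph{not} $\partial$-stable. Concretely, take any chamber $D\neq C$ sharing a codimension-one face $F'$ with $C$, and choose $g$ with $gC=D$. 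Then $g\I$ lies in a non-singleton $\I_C$-orbit $O$ (since $C\not\subseteq\overline D$, hence $\I_C\not\subseteq \I_D=g\I g^{-1}$), whereas $\{g\I\}$ is already a singleton $\I_{F'}$-orbit (since $F'\subseteq\overline D$, hence $\I_{F'}\subseteq\I_D$). Under the inclusion $\mathbf{X}^{\I_C}\hookrightarrow\mathbf{X}^{\I_{F'}}$ the non-singleton basis vector $\sum_{h\I\in O}e_{h\I}$ therefore acquires the singleton summand $e_{g\I}$, so $\partial$ carries non-singleton chains into the singleton piece. At best you obtain a short exact sequence of complexes with the singleton piece as subobject; you would then have to prove that the quotient complex---concentrated in degrees $0,\dots,d$ with trivial augmentation target---is exact, and nothing in your outline addresses this. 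That quotient carries all of the Bruhat--Tits combinatorics you defer, and its acyclicity is not a formality; in particular it cannot be read off from the contractibility of any single closed polysimplex $\overline{gC}$.
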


Let $F$ be a facet in $\overline C$.   
%Extending functions on  $\Pp_F^\dagger$ by zero to $\Gp$ induces a  $\Pp_F^\dagger$-equivariant embedding
%\begin{equation*}
%    \mathbf{X}^\dagger_F := \k[\I\backslash \Pp_F^\dagger] \hookrightarrow \mathbf{X}   \ 
%\end{equation*}
%and we  consider the subalgebra
%\begin{equation*}
 %   \H^\dagger_F :=\k[\I\backslash \Pp_F^\dagger/\I]
%\end{equation*} of the functions in $\Hh$ with support  in $\Pp_F^\dagger$.
Extending functions on  ${\mathbf G}_F^\circ(\mathfrak O)$ by zero to $\Gp$ induces a  ${\mathbf G}_F^\circ(\mathfrak O)$-equivariant embedding
\begin{equation*}
    \mathbf{X}_F := \k[\I\backslash {\mathbf G}_F^\circ(\mathfrak O)] \hookrightarrow \mathbf{X}   \ 
\end{equation*}
and we  consider the subalgebra
\begin{equation*}
    \H_F :=\k[\I\backslash {\mathbf G}_F^\circ(\mathfrak O)/\I]
\end{equation*} of the functions in $\Hh$ with support  in ${\mathbf G}_F^\circ(\mathfrak O)$.

\begin{lemma} The natural maps of  respectively
$(\mathbf{G}_F^\circ(\Oo),\H_{x_0}^{opp})$-bimodules and 
 $(\mathbf{G}_F^\circ(\Oo),\Hh^{opp})$-bimodules
\begin{equation}\label{transfF}\H_{x_0}\otimes _{\H_{F}}\XX_F\rightarrow \XX_{x_0}^{\I_F}\end{equation}
\begin{equation}\label{transf}\Hh\otimes _{\H_{F}}\XX_F\rightarrow \XX^{\I_F}\end{equation} 
are bijective.
%For i., write some analog to \cite[Proposition 4.25]{OS}.
\end{lemma}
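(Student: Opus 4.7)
The plan is to realize both sides of \eqref{transf} as indexed by the same set of distinguished representatives of $\tilde\W/\tilde W_F$ and to match them piece-by-piece; the argument for \eqref{transfF} is identical, with $\K=\mathbf G_{x_0}^\circ(\Oo)$ in place of $\G$. First I would observe that $\mathbf X_F$ is automatically $\I_F$-fixed, so that \eqref{transf} takes values in $\mathbf X^{\I_F}$. Indeed, applying \eqref{pr2} to $F\subseteq\overline C$ gives $\I_F\subseteq \I_C=\I$, and since $\I_F$ is normal in $\mathbf G_F^\circ(\Oo)$ (as the pro-$p$ radical), for any $g\in \mathbf G_F^\circ(\Oo)$ and $i\in\I_F$ we have $gig^{-1}\in\I_F\subseteq\I$, whence $\I gi=\I g$.

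The Bruhat decomposition $\mathbf G_F^\circ(\Oo)=\coprod_{v\in\tilde W_F}\I\hat v\I$ determines a subgroup $\tilde W_F\subset\tilde\W$ and yields the basis $\{\tau_v\}_{v\in\tilde W_F}$ of $\H_F$. Choose a set $\tilde\W^F\subset\tilde\W$ of minimal-length coset representatives for $\tilde\W/\tilde W_F$: every $w\in\tilde\W$ decomposes uniquely as $w=w^F v$ with $w^F\in\tilde\W^F$, $v\in\tilde W_F$ and $\ell(w)=\ell(w^F)+\ell(v)$; by the length-additive product rule in $\Hh$ recalled in \S\ref{lesAlg}, $\tau_w=\tau_{w^F}\tau_v$, hence $\Hh=\bigoplus_{w^F\in\tilde\W^F}\tau_{w^F}\H_F$ is a free right $\H_F$-module and
$$\Hh\otimes_{\H_F}\mathbf X_F\;=\;\bigoplus_{w^F\in\tilde\W^F}\tau_{w^F}\otimes\mathbf X_F.$$
Regrouping the Bruhat decomposition of $\G$ by $\tilde W_F$-cosets gives $\G=\coprod_{w^F}\I\hat{w^F}\mathbf G_F^\circ(\Oo)$, hence $\mathbf X=\bigoplus_{w^F}\mathbf X_{w^F}$ where $\mathbf X_{w^F}:=\k[\I\backslash\I\hat{w^F}\mathbf G_F^\circ(\Oo)]$. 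Each $\mathbf X_{w^F}$ is stable under right multiplication by $\mathbf G_F^\circ(\Oo)$, and therefore under $\I_F$, so $\mathbf X^{\I_F}=\bigoplus_{w^F}\mathbf X_{w^F}^{\I_F}$. The convolution $\tau_{w^F}\cdot f$ of any $f\in\mathbf X_F$ is supported in $\I\hat{w^F}\I\cdot\mathbf G_F^\circ(\Oo)=\I\hat{w^F}\mathbf G_F^\circ(\Oo)$, so \eqref{transf} respects these parallel decompositions.

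It then suffices to prove that $\mathbf X_F\to\mathbf X_{w^F}^{\I_F}$, $x\mapsto\tau_{w^F}\cdot x$, is a bijection for each $w^F\in\tilde\W^F$. Identifying $\mathbf X_{w^F}$ with $\k[\I^{w^F}\backslash\mathbf G_F^\circ(\Oo)]$ via $\I\hat{w^F}g\leftrightarrow\I^{w^F}g$, where $\I^{w^F}:=\hat{w^F}^{-1}\I\hat{w^F}\cap\mathbf G_F^\circ(\Oo)$, one reads $\mathbf X_{w^F}^{\I_F}=\k[\I^{w^F}\backslash\mathbf G_F^\circ(\Oo)/\I_F]$, and bijectivity with $\mathbf X_F=\k[\I\backslash\mathbf G_F^\circ(\Oo)]$ reduces to the double-coset identity
$$\I\;=\;\I^{w^F}\cdot\I_F\qquad\text{inside }\mathbf G_F^\circ(\Oo).$$

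The main obstacle is this last identity. Modulo $\I_F$, the quotient $\mathbf G_F^\circ(\Oo)/\I_F$ is the group of $\mathbb F_q$-points of the reductive quotient of $\overline{\mathbf G}_F^\circ$, in which $\I/\I_F$ is the unipotent radical of the Borel $\Iw/\I_F$; the identity is therefore equivalent to showing that the image of $\I^{w^F}$ in this reductive quotient is exactly this unipotent radical. The minimality of $w^F$---the condition $\ell(w^F s)>\ell(w^F)$ for every simple reflection $s$ associated with $F$---translates into a positivity statement for affine roots in $\Ap$, and a Bruhat-Tits decomposition of $\hat{w^F}^{-1}\I\hat{w^F}$ into affine root subgroups then forces the desired image. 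This root-theoretic analysis, essentially a refined Iwahori decomposition inside the parahoric, is the technical heart of the proof.
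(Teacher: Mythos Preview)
Your approach is correct and, for \eqref{transf}, essentially reconstructs the argument of \cite[Proposition~4.25]{OS} that the paper simply cites. The decomposition of $\Hh$ as a free right $\H_F$-module over minimal-length representatives, the matching decomposition of $\mathbf X^{\I_F}$, and the reduction to a coset identity are exactly the ingredients used there; your identity $\I=\I^{w^F}\cdot\I_F$ is the two-sided form of the paper's $\I\tilde d\I_F=\I\tilde d\I$.

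For \eqref{transfF} the paper takes a genuinely different route. Instead of repeating the direct argument with $\K$ in place of $\G$, it leverages \eqref{transf}: since $\H_F$ is a Frobenius (hence self-injective) algebra, the free right $\H_F$-module $\H_{x_0}$ is a direct summand of $\Hh$, so $\H_{x_0}\otimes_{\H_F}\mathbf X_F\hookrightarrow\Hh\otimes_{\H_F}\mathbf X_F\cong\mathbf X^{\I_F}$ gives injectivity of \eqref{transfF} for free. Only surjectivity then requires the coset identity, and only the inclusion $\I\subseteq\I^{\tilde d}\cdot\I_F$ is needed. Your uniform direct argument avoids importing the Frobenius property from \cite{Saw,Tin}, at the price of having to establish both inclusions in $\I=\I^{w^F}\cdot\I_F$; in particular you must also check $\I^{w^F}\subseteq\I$, which genuinely uses minimality of $w^F$ (it fails for $w^F\in\tilde W_F$). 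The root-subgroup analysis you sketch does handle both inclusions at once, so nothing is lost---your argument is simply more self-contained, while the paper's is shorter given the external input.
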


\begin{proof}  The isomorphism \eqref{transf} is proved in \cite[Proposition 4.25]{OS}. The proof of  the bijectivity of \eqref{transfF},
is obtained similarly as follows. Let $\Phi^+_F$ denote the set of
positive roots that take value zero on $F$ and $\Dd_F$ the subset of all elements $d$ in $\Wf$ such that $d \Phi_F^+\subseteq \Phi^+$. Choose a lift $\tilde d\in \tilde \Wf$ for each such $d$. Then it is classical to establish that $\H_{x_0}$ is a free right  $\H_F$-module with basis $\{\tau_{\tilde d}\}_{d\in \Dd_F}$. Since $\H_F$ is Frobenius (\cite[Thm. 2.4]{Saw} and \cite[Prop 3.7]{Tin}), it
is self-injective: this implies that  the $\H_F$-module
$\H_{x_0}$ is  a direct summand of  $\Hh$ and the composition $\H_{x_0}\otimes _{\H_{F}}\XX_F \rightarrow  \Hh\otimes _{\H_{F}}\XX_F$ is an injective map inducing  the natural injection
\begin{equation*}\H_{x_0}\otimes _{\H_{F}}\XX_F\rightarrow \XX_{x_0}^{\I_F}.\end{equation*} 
To prove that it is surjective, we argue (again as in  \cite[Proposition 4.25]{OS}) using the fact that the set of all $\tilde d$ for $d\in \Dd_F$ yields a system of representatives for the double cosets
$\I\backslash \K/\mathbf{G}_F^\circ(\Oo)$ and that $\I \tilde d \I_F=\I \tilde d\I$.

\end{proof}

\medskip

We define $\EuScript P_F^\dagger$
to be the stabilizer of $F$ in $\Gp$. 
For $g \in \Pp_F^\dagger$, set  $\epsilon_F(g) = +1$, respectively $-1$, if $g$ preserves, respectively reverses, a given orientation of $F$. 
For any representation $\V$ of  $\Pp_F^\dagger$, we denote by $\V\otimes \epsilon_F$ the space $\V$ endowed with the structure of a  representation of $\Pp_F^\dagger$ where the action of $\Pp_F^\dagger$ is twisted by the character $\epsilon_F$.
%In particular, this applies to $\V=     \mathbf{X}_F^\dagger$. This is a  $(\Pp_F^\dagger, (\H_F^\dagger)^{opp})$-bimodule.

%Define the involutive automorphism $j_F$ of $\H_F^\dagger$ as in \cite[3.3.1]{OS}. For any left (resp right)  $\Hh_F^\dagger$-module $\m$, we denote by $(\epsilon_F)\m$ (resp.  $\m(\epsilon_F)$) the space 
% $\m$ endowed with the structure of $\H_F^\dagger$-module twisted by the involution $j_F$.
%\begin{rema}\label{rema:epsilon}
%i. The $(\Pp_F, (\Hh_F^\dagger)^{opp})$-bimodules $ \mathbf{X}_F^\dagger   \otimes \epsilon_F$ and
%$ (\epsilon_F)\mathbf{X}_F^\dagger$ are isomorphic. This is \cite[Lemma 3.7]{OS}.

%ii. The orientation character $\epsilon_F$ is trivial on  $ {\mathbf G}_F^\circ(\mathfrak O)$ (see \cite[1.2]{SS}). 
% The restriction of $j_F$  to $\H_F$ is the identity.

%\end{rema}

\medskip

For  $i\in\{0, ..., d\}$, we fix a (finite) set of representatives $\mathscr{F}_i$ for the $\Gp$-orbits in $\mathscr{X}_i$ such that every member in $\mathscr{F}_i$ is contained in $\overline{C}$.
As explained in \cite[3.3.2]{OS}:
\begin{prop} \label{prop:sum}Let $i\in\{0, ..., d\}$. \begin{itemize}
\item[i.] The   $(\Gp, \Hh^{opp})$-bimodule $C_c^{or} (\mathscr{X}_{(i)}, \cX) $ is isomorphic to the direct sum
 $$ \bigoplus_{F\in \Ff_i}\ind_{\Pp_F^\dagger}^\Gp(   \mathbf{X} ^{\I_F}\otimes \epsilon_F).$$
\item[ii.]  In particular, as a left $\Aa_{anti}$-module, it is isomorphic to a direct sum of modules of the form $\XX^{\I_F}$ for $F\in \Ff_i$.
\end{itemize}
\end{prop}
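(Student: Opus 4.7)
The plan is to describe $C_c^{or}(\mathscr{X}_{(i)}, \cX)$ explicitly as a sum of stalks indexed by oriented facets, then to group these summands by $\Gp$-orbits to obtain the induction in (i), and finally to apply a straightforward coset splitting to deduce (ii). Essentially everything is formal once the structure of the coefficient system and of the right $\Hh$-action are unwound.

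First I would write an element of $C_c^{or}(\mathscr{X}_{(i)}, \cX)$ as a function $\omega$ assigning to each oriented $i$-facet $(F',c)$ an element of the stalk $\XX^{\I_{F'}}$, antisymmetric in the orientation and compactly supported modulo $\Gp$. The group $\Gp$ acts by transporting facets together with their stalks, and the right $\Hh$-action acts pointwise on the stalk values; in particular it commutes with the $\Gp$-action.

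For (i), I would partition $\mathscr{X}_{(i)}$ into $\Gp$-orbits, each represented by a facet $F\in \Ff_i\subseteq \overline{C}$. By property (\ref{pr1}), the stalk at any translate $gF$ is $g\cdot \XX^{\I_F}$, and the full (unoriented) stabilizer of $F$ in $\Gp$ is $\Pp_F^\dagger$, acting naturally on $\XX^{\I_F}$. The character $\epsilon_F$ records which elements of $\Pp_F^\dagger$ reverse the chosen orientation of $F$, so the antisymmetry of $\omega$ forces the stalk to transform as $\XX^{\I_F}\otimes \epsilon_F$. Standard coset bookkeeping for $\Gp$-equivariant polysimplicial coefficient systems then identifies the part of $C_c^{or}(\mathscr{X}_{(i)}, \cX)$ supported on the orbit of $F$ with $\ind_{\Pp_F^\dagger}^{\Gp}(\XX^{\I_F}\otimes \epsilon_F)$. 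Summing over $F\in \Ff_i$ produces the claimed decomposition as a $(\Gp, \Hh^{opp})$-bimodule; compatibility with the right $\Hh$-action is automatic because this action is pointwise on stalk values.

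For (ii), I would fix $F\in \Ff_i$ and choose any set of representatives $\{\gamma\}$ for the coset space $\Gp/\Pp_F^\dagger$. Since $\ind$ is compactly supported induction,
\[
\ind_{\Pp_F^\dagger}^{\Gp}(\XX^{\I_F}\otimes \epsilon_F) \;=\; \bigoplus_{\gamma} \gamma\cdot (\XX^{\I_F}\otimes \epsilon_F)
\]
as vector spaces. Because the right $\Hh$-action commutes with the left $\Gp$-action, it preserves each summand and acts on it through the right $\Hh$-module structure on $\XX^{\I_F}$; the orientation twist only affects how elements of $\Pp_F^\dagger$ permute these summands, not the $\Hh$-action within each. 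Restricting to the commutative subalgebra $\Aa_{anti}\subseteq \Hh$ (for which left and right module structures coincide) and summing over $F\in \Ff_i$ yields (ii). The one mildly delicate point to verify is that the orientation twist is compatible with the right $\Hh$-action, but this is immediate since the twist lives on the $\Gp$-side while $\Aa_{anti}$ acts on stalk values; beyond that the argument is pure bookkeeping, essentially as in \cite[3.3.2]{OS}.
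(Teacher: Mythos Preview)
Your proposal is correct and follows exactly the standard orbit-decomposition argument that the paper invokes by citing \cite[3.3.2]{OS}; the paper itself supplies no independent proof beyond that reference. The only cosmetic remark is that the Hecke action preserves each coset summand $\gamma\cdot(\XX^{\I_F}\otimes\epsilon_F)$ because it acts pointwise on stalk values---a fact you already record---rather than merely because it commutes with the $\Gp$-action, and the parenthetical about left and right $\Aa_{anti}$-module structures coinciding is unnecessary.
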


\section{\label{sec:over-a-ring}Principal series representations over a ring}

Let $\R$ be a  commutative $\k$-algebra. Given a topological group $H$, we  consider $\R$-representations of $H$  that is to say $\R$-modules endowed with a $\R$-linear action of $H$. If the stabilizers 
of the points are open in $H$, then such a representation is called smooth. 
Let $\R^\times$  be the group of invertible elements in $\R$. A  morphism of $\k$-algebras
$\Aa_{anti}\rightarrow \R$ is called a character. If  the image of every element $\tau_{e^\lambda}$, $\lambda\in\tilde \X_*^-(\T)$ lies in $\R^\times$, then the character  is called   regular.

\medskip
\begin{lemma} \label{lemma:xi}There is a bijection $\phi\mapsto \overline \phi$ from the  set of morphisms  $\T/\T^1\mapsto \R^\times$  into the set of  regular characters  $\Aa_{anti}\rightarrow \R$ such that
$$\overline \phi(\tau_{e^\lambda}):= 
\phi(\widehat {e^{-\lambda}})\textrm{ for all $\lambda
\in \tilde \X^-_*(\T)$}.$$ We denote the inverse map by $\psi\mapsto \underline\psi$. 
\end{lemma}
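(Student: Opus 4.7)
The plan is to exploit the fact that, by the product relation \eqref{eq:add}, the algebra $\Aa_{anti}$ is canonically identified with the $\k$-algebra of the commutative monoid $\tilde\X^-_*(\T)$ (written additively): the map $\lambda\mapsto\tau_{e^\lambda}$ converts addition of antidominant coweights into multiplication in $\Hh$. Under this identification a $\k$-algebra morphism $\Aa_{anti}\to\R$ is exactly a monoid homomorphism $\tilde\X^-_*(\T)\to(\R,\cdot)$, and regularity is the condition that its values lie in $\R^\times$. The forward direction of the bijection is then straightforward: given a group homomorphism $\phi:\T/\T^1\to\R^\times$, the formula $\overline\phi(\tau_{e^\lambda}):=\phi(\widehat{e^{-\lambda}})$ is multiplicative on generators because $\widehat{e^{-\lambda}}\widehat{e^{-\lambda'}}\equiv\widehat{e^{-\lambda-\lambda'}}\pmod{\T^1}$ (which follows from the splitting \eqref{splitting}), combined with \eqref{eq:add}, and clearly takes values in $\R^\times$.

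The substance lies in constructing the inverse $\psi\mapsto\underline\psi$, and the step I expect to be the main obstacle is showing that the submonoid $\tilde\X^-_*(\T)$ generates the abelian group $\T/\T^1=\tilde\X_*(\T)$. I would prove this by using the splitting \eqref{splitting} to write $\tilde\X_*(\T)\cong\T^0/\T^1\oplus\X_*(\T)$ and then observing that: (a) the torsion summand $\T^0/\T^1$ is already contained in $\tilde\X^-_*(\T)$, because it maps to $0\in\X^-_*(\T)$; and (b) any $\lambda'\in\X_*(\T)$ can be written as $(\lambda'+\mu)-\mu$ for $\mu\in\X^-_*(\T)$ chosen sufficiently antidominant that $\lambda'+\mu$ is still antidominant. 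Combining these, every element of $\tilde\X_*(\T)$ is a difference $\nu_2-\nu_1$ of two elements of $\tilde\X^-_*(\T)$.

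Once that is in hand, I would set $\underline\psi(\widehat{e^{\nu_2-\nu_1}}):=\psi(\tau_{e^{\nu_1}})\psi(\tau_{e^{\nu_2}})^{-1}$ for $\nu_1,\nu_2\in\tilde\X^-_*(\T)$. Well-definedness is immediate from \eqref{eq:add}: if $\nu_2-\nu_1=\nu_2'-\nu_1'$ then $\tau_{e^{\nu_1}}\tau_{e^{\nu_2'}}=\tau_{e^{\nu_1+\nu_2'}}=\tau_{e^{\nu_1'+\nu_2}}=\tau_{e^{\nu_1'}}\tau_{e^{\nu_2}}$, and since $\psi$ is regular each $\psi(\tau_{e^{\nu_i}})\in\R^\times$, so one may rearrange to obtain the required identity. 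Multiplicativity of $\underline\psi$ and the fact that $\phi\mapsto\overline\phi$ and $\psi\mapsto\underline\psi$ are mutually inverse then follow formally from the same computations.
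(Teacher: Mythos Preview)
Your proposal is correct and follows essentially the same route as the paper's proof: both construct the inverse $\psi\mapsto\underline\psi$ by writing an arbitrary $\lambda\in\tilde\X_*(\T)$ as a difference $\lambda_1-\lambda_2$ of antidominant elements and setting $\underline\psi$ to be the corresponding quotient of values of $\psi$, with well-definedness coming from \eqref{eq:add} and regularity. The paper merely asserts that such a decomposition $\lambda=\lambda_1-\lambda_2$ exists, whereas you spell out why (via the splitting \eqref{splitting} and the observation that $\T^0/\T^1\subset\tilde\X^-_*(\T)$), so your version is slightly more detailed but not different in substance.
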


\begin{proof}  We use \eqref{eq:add} repeatedly to justify the following arguments. The formula given for $\overline\phi$ defines a regular character $\Aa_{anti}\rightarrow \R$. 
Now consider $\psi:\Aa_{anti}\rightarrow \R$ a regular character.
Let $t\in\T$ and denote by $\lambda$ the element in $\tilde \X^*(\T)$ such that $\I t\I=\I \widehat{e^{\lambda}} \I$.  
There are $\lambda_1, \lambda_2\in \tilde \X_*^-(\T)$ such that 
$\lambda=\lambda_1-\lambda_2$ and we set 
$$\underline\psi(t):=\psi(\tau_{e^{\lambda_1}}) \psi(\tau_{e^{\lambda_2}})^{-1}$$ which is well defined because $\psi$ is regular.
Furthermore, one checks that it defines a morphism $\T\rightarrow \R^\times$ which is trivial on $\T^1$.

\end{proof}

Consider  a regular $\R$-character
$ \xi: \Aa_{anti}\rightarrow \R$ and the corresponding morphism $\underline\xi$  which we  see as a map $ \T\rightarrow  \R^\times$  trivial on $\T^1$.
Inflating $\underline\xi$ to a character of the Borel $\B$, we
consider the $\R$-module of the functions $f: \G\rightarrow \R$ satisfying 
$f(bg)= \underline\xi(b) f(g)$ for all $g\in \Gp$, $b\in \B$.  It is endowed with a $\R$-linear action of $\G$ by right translations namely $(g,f)\mapsto f(\, . \,g)$.
We denote by $$\Ind_{\B}^\Gp(\underline\xi)$$  its smooth part and obtain a
smooth $\R$-representation of $\G$.

\begin{lemma}\label{rema:mackey} Let $\Omega$ be a pro-$p$ subgroup of $\K$. The space of $\Omega$-invariant functions
$$(\Ind_{\B}^\Gp(\underline\xi))^{\Omega}$$ is a free $\R$-module of finite rank equal to $\vert  \B\backslash\G/\Omega\vert$. 

\end{lemma}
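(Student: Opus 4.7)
The plan is to construct an explicit $\R$-basis of $(\Ind_\B^\G(\underline\xi))^\Omega$ indexed by the double cosets in $\B\backslash\G/\Omega$.

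First I would use the Iwasawa decomposition $\G=\B\K$, valid since $x_0$ is a special vertex. Combined with $\Omega\subseteq\K$, this gives a bijection $(\B\cap\K)\backslash\K/\Omega\xrightarrow{\sim}\B\backslash\G/\Omega$. Since $\K$ is compact and $\Omega$ is open in $\K$, both sides are finite; let $g_1,\ldots,g_r\in\K$ be a system of representatives.

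Next, for each $i$ I would attempt to define $f_i:\G\to\R$ supported on $\B g_i\Omega$ by
\[ f_i(b g_i\omega)=\underline\xi(b),\qquad b\in\B,\ \omega\in\Omega. \]
This function is well-defined precisely when $\underline\xi$ is trivial on the subgroup $\B\cap g_i\Omega g_i^{-1}$. Granting this, $f_i$ is automatically $\Omega$-invariant (hence smooth, as $\Omega$ is open) and satisfies $f_i(bg)=\underline\xi(b)f_i(g)$, so $f_i\in (\Ind_\B^\G(\underline\xi))^\Omega$. The $f_i$ are $\R$-linearly independent since their supports $\B g_i\Omega$ are pairwise disjoint, and any $f$ in the invariants is determined by its values $f(g_1),\ldots,f(g_r)$, with $f=\sum_i f(g_i)\,f_i$. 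This would finish the proof.

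The single substantive step, which is the main obstacle, is therefore the vanishing of $\underline\xi$ on $\B\cap g_i\Omega g_i^{-1}$. Since $g_i\in\K$ and $\Omega$ is pro-$p$, the conjugate $g_i\Omega g_i^{-1}$ is a pro-$p$ subgroup of $\K$, so $\B\cap g_i\Omega g_i^{-1}$ is a pro-$p$ subgroup of $\B\cap\K$. By the equality $\B\cap\K=\I'\cap\B=\T^0\cdot(\U\cap\K)$ recorded in the preliminaries, the character $\underline\xi$ (inflated from $\T$ to $\B$ as trivial on $\U$, and trivial on $\T^1$ by construction) factors on $\B\cap\K$ through the finite quotient $\T^0/\T^1\cong\overline{\mathbf T}(\mathbb F_q)$, whose order is prime to $p$. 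Hence any pro-$p$ subgroup of $\B\cap\K$ lies in $\ker\underline\xi$, and in particular $\underline\xi$ vanishes on $\B\cap g_i\Omega g_i^{-1}$, as required.
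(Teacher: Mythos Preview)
Your proof is correct and follows essentially the same approach as the paper: both reduce to showing that $\underline\xi$ is trivial on $\B\cap g_i\Omega g_i^{-1}$ by observing that this is a pro-$p$ subgroup of $\B\cap\K=\B\cap\Iw$, hence lies in $\B\cap\I$ (equivalently, maps trivially to the prime-to-$p$ quotient $\T^0/\T^1$). The only cosmetic differences are that you build the basis functions $f_i$ explicitly while the paper phrases the same thing via the evaluation isomorphism $f\mapsto (f(g_i))_i$, and you invoke Iwasawa decomposition where the paper (somewhat loosely) cites Cartan to choose representatives in $\K$.
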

\begin{proof} The morphism $\underline\xi$ can be seen as a $\k$-representation of $\T$ over the $\k$-vector space $\R$ and therefore, by the classical theory of $\k$-representations of $\Gp$, 
if $\Omega$ is a compact open subgroup of $\Gp$, we have a $\k$-linear isomorphism
$$( \Ind_{\B}^\Gp(\underline\xi))^\Omega \cong \prod_{k\in \B\backslash \Gp/\Omega} \R^{\B\cap k \Omega k^{-1}}$$ given by 
the evaluation of $f \in (\Ind_{\B}^\Gp(\underline\xi))^\Omega$ at all $k$ in a chosen system of representatives of $\B\backslash \Gp/\Omega$. If $\Omega$ is a pro-$p$ subgroup of $\K$ then, by Cartan decomposition,
one can choose $k\in \K$ and  then $\B\cap k \Omega k^{-1}\subseteq \B\cap \K=\B\cap \Iw$.  But $\B\cap k \Omega k^{-1}$  is a pro-$p$ group so  it is contained  in $\B\cap\I$ on which $\underline\xi$ is trivial. We therefore have a $\k$-linear isomorphism 
$$( \Ind_{\B}^\Gp(\underline\xi))^\Omega \cong \prod_{k\in \B\backslash \Gp/\Omega} \R$$  given by 
the evaluation of $f \in (\Ind_{\B}^\Gp(\underline\xi))^\Omega$ at all $k$ in a chosen system of representatives of $\B\backslash \Gp/\Omega$.
This map being obviously $\R$-equivariant, we have proved that 
$( \Ind_{\B}^\Gp(\underline\xi))^\Omega$ is a free $\R$-module of rank $\vert \B\backslash \Gp/\Omega\vert$.

\end{proof}

\begin{prop} We have an isomorphism of $\R$-representations of $\Gp$
$$ \xi\otimes_{\Aa_{anti}}\XX\cong \Ind_{\B}^\Gp(\underline\xi).$$
\label{prop:iso}
\end{prop}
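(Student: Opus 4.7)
The plan is to construct an explicit $\Gp$-equivariant $\R$-linear map
$$\Phi: \xi \otimes_{\Aa_{anti}} \XX \longrightarrow \Ind_\B^\Gp(\underline\xi)$$
and then verify bijectivity by comparing $\I$-invariants. Let $f_0 \in \Ind_\B^\Gp(\underline\xi)^\I$ be the function supported on the open cell $\B\I$ with $f_0(bi) = \underline\xi(b)$ for $b \in \B$ and $i \in \I$; this is well-defined since $\underline\xi$ is trivial on $\B\cap\I$. Let $v_\I \in \XX = \ind_\I^\Gp(\1)$ denote the characteristic function of the coset $\I$. By the universal property of $\XX$, there is a unique $\Gp$-equivariant map $\XX \to \Ind_\B^\Gp(\underline\xi)$ sending $v_\I$ to $f_0$, and this extends by $\R$-linearity to a map $\R \otimes_\k \XX \to \Ind_\B^\Gp(\underline\xi)$.

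To see that this descends through $\xi \otimes_{\Aa_{anti}}(-)$, I must check that $f_0$ is a $\xi$-eigenvector for the right $\Aa_{anti}$-action on $(\Ind_\B^\Gp\underline\xi)^\I$:
$$f_0 \cdot \tau_{e^\lambda} = \xi(\tau_{e^\lambda})\, f_0 \qquad \text{for all } \lambda \in \tilde\X_*^-(\T).$$
Lemma \ref{negative} is the crucial input: for $t:=\widehat{e^\lambda} \in \T^{++}$, the double coset $\I t \I$ decomposes into right $\I$-cosets indexed by $\I^-/t^{-1}\I^- t$. Evaluating the Hecke sum at $g=1$ and invoking the support condition on $f_0$ together with the uniqueness in the big-cell Bruhat factorization $\U^-\T\U$ isolates a single non-vanishing contribution from the trivial coset representative, namely $f_0(t^{-1}) = \underline\xi(t^{-1})$. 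By Lemma \ref{lemma:xi} this equals $\xi(\tau_{e^\lambda})$; the identity extends to arbitrary $g\in\Gp$ by $\B$-equivariance and $\I$-invariance of both sides.

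For bijectivity I compare $\I$-invariants. Lemma \ref{rema:mackey} realizes $(\Ind_\B^\Gp\underline\xi)^\I$ as a free $\R$-module of rank $|\B\backslash\Gp/\I|$, and Iwasawa $\Gp=\B\K$ together with the finite Bruhat decomposition in $\overline{\mathbf G}_{x_0}(\mathbb F_q)$ identifies this cardinality with $|\Wf|$. On the source side, $(\xi \otimes_{\Aa_{anti}} \XX)^\I$ identifies with $\xi \otimes_{\Aa_{anti}} \Hh$; the Bernstein-type relation \eqref{mult}, combined with the length-zero commutations involving $\T^0/\T^1 \subset \Aa_{anti}$ and the regularity of $\xi$, shows this $\R$-module is spanned by $|\Wf|$ elements indexed by lifts of $\Wf$ to $\tilde\Wf$. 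The map $\Phi^\I$ sends this spanning set bijectively to the canonical basis of $(\Ind_\B^\Gp\underline\xi)^\I$ given by indicators of the Bruhat cells $\B \dot w\I$ ($w \in \Wf$), hence is an isomorphism of free $\R$-modules of rank $|\Wf|$. Since the source is generated as a $\Gp$-module by $1 \otimes v_\I$ and $\Phi$ is $\Gp$-equivariant, bijectivity of $\Phi$ follows. The main obstacle is the eigenvalue verification in the descent step, which rests on the careful coset combinatorics supplied by Lemma \ref{negative} and the uniqueness of the big-cell Bruhat factorization.
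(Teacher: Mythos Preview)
Your construction of $\Phi$ and the eigenvalue verification for the descent through $\xi\otimes_{\Aa_{anti}}(-)$ match the paper exactly; this part is fine.

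The bijectivity argument, however, has two genuine gaps. First, you assert that $(\xi\otimes_{\Aa_{anti}}\XX)^\I$ ``identifies with'' $\xi\otimes_{\Aa_{anti}}\Hh$. There is a natural map $\xi\otimes_{\Aa_{anti}}\Hh\to(\xi\otimes_{\Aa_{anti}}\XX)^\I$ induced by $\Hh=\XX^\I\hookrightarrow\XX$, but tensor product and $\I$-invariants do not commute in general, and you give no argument that this map is bijective. In fact the paper establishes the analogous statement only \emph{after} Proposition~\ref{prop:iso}, in Proposition~\ref{prop:invariants}, using $\Phi'$ itself as input; invoking it here is circular. Second, even granting that $\Phi^\I$ is an isomorphism, your inference ``since the source is generated by $1\otimes v_\I$, bijectivity of $\Phi$ follows'' is not valid. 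Surjectivity of $\Phi$ would require knowing that $\Ind_\B^\Gp(\underline\xi)$ is generated over $\R[\Gp]$ by its $\I$-invariants, which is precisely what the paper proves directly in step~2 using Lemmas~\ref{lemmaSS} and~\ref{lemmaSS'}. Injectivity would require that a nonzero kernel has nonzero $\I$-invariants; this holds when $\k$ has characteristic $p$ (since $\I$ is pro-$p$), but the proposition is stated for an arbitrary field $\k$, and over such a field a nonzero smooth representation may well have trivial $\I$-fixed vectors.

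The paper instead proves surjectivity by showing $f_1$ generates the target, and proves injectivity by a direct argument (Facts~\ref{f1} and~\ref{f2}): every element of the source can be rewritten, using regularity of $\xi$ and the generalized Tits system, as a sum of terms supported in a single set $\I t^n\K$, after which a Bruhat-type disjointness (Fact~\ref{f2}) forces injectivity. This geometric argument works uniformly over any $\k$ and does not rely on comparing $\I$-invariants.
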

\begin{proof} The proof follows closely the strategy of  \cite[Prop 11, p.80]{SS1} which considers the case of the principal series representation induced by the trivial character with values in $\Z$ in the case of $\mathbf G={\rm GL}_n$.   
In the case of unramified principal series representations of ${\rm GL}_n$ over a ring, and respectively, for more general comparison between compact and parabolic  induction over an algebraically closed field with characteristic $p$,  \cite[4.5]{VigGL2}  
and \cite[Theorem 3.1, Corollary 3.6]{Her} use similar techniques inspired by \cite{SS1}.

Denote by $f_{1}$ the $\I$-invariant function in $\Ind_{\B}^\Gp(\underline\xi)$ with support $\B \I$ and value $1_\R$ at $1_\Gp$. 
Since $\underline\xi$ is trivial on $\B\cap \I$, it is well defined  by the formula  $f_1(b u)= \underline\xi(b)$ for all $b\in \B$ and $u\in \I$.

1/ We consider the  morphism of $\k$-representations of $\G$
$$\Phi: \XX \longrightarrow   \Ind_{\B}^\Gp(\underline\xi)$$
sending the characteristic function ${\rm char}_\I$ of $\I$   onto $f_1$. 
Let $\lambda\in \tilde\X_*^-(\T)$. We compute $f_1\tau_{e^\lambda}$. Decompose $\I \widehat {e^\lambda}\I$ into simple right cosets mod $\I$. By Lemma \ref{negative},  one can find such a decomposition 
 $\I \widehat {e^\lambda}\I=\coprod_{k} \I \widehat {e^\lambda} k$ with $k$ ranging over some finite subset of  $\I^-$.
Now   $f_1\tau_{e^\lambda}$ is $\I$-invariant with support  in $\B \I^-  \widehat {e^\lambda} \I^-=\B \I$. 
To compute its value at $1$,  
one checks  that  for $k\in \I^-$, we have $1\in\B \I^- \widehat {e^\lambda} k$ if and only if 
$\I \widehat {e^\lambda} k= \I \widehat {e^\lambda}$ and therefore
$$f_1\tau_{e^\lambda}(1)= [\widehat {e^\lambda} ^{-1}. f_1](1)=\underline\xi(\widehat {e^{-\lambda}})=\xi(\tau_{ {e^\lambda}}).$$
We have proved that $\Phi(\tau_{e^\lambda})=\xi(\tau_{ {e^\lambda}})\Phi({\rm char}_\I)$. It proves that  $\Phi$ induces
a
 morphism of $\R$-representations   of $\G$ $$\Phi':  \xi\otimes_{\Aa_{anti}}\XX \longrightarrow   \Ind_{\B}^\Gp(\underline\xi).$$

2/  We show that  $f_1$ generates $\Ind_{\B}^\Gp(\underline\xi)$ as a $\R$-representation of $\Gp$.
Let $f\in  \Ind_{\B}^\Gp(\underline\xi)$. Its support is open and compact in $\B\backslash \G$  and by Lemma \ref{lemmaSS}, we can suppose (after restricting and translating) that $f$ has support in $\B \U^-$. The restriction  $f\vert_{\U^-}$ is locally constant and we can suppose (after restricting the support more) that $f\vert _{\U^-}$ is constant  on some compact open subset $\EuScript C$. 
By Lemma  \ref{lemmaSS'}, this set $\EuScript C$ is  the finite union  of subsets of the form $ t^{-n }\I^- t^n u$ for $n$ large enough and $u\in \U^-$, where $t$ is defined in \S\ref{strongl}. Restricting again (and translating), one can suppose that $f\vert_{\U^-}$ has support $ t^{-n }\I^- t^n$ and is  constant with value $r\in \R$ on this subset.
Now for all $(b,u)\in \B\times \I$,  write $u= u^+ u_0 u^- \in\I^+\I^0\I^-$ and recall that $\xi(u^+ u^0)=1$.
We have $ (t^n f)(bu)= f(b u^+ u_0 t^n  t^{-n} u ^- t^n)=\underline\xi(b u^+ u_0  t^n) r=\underline\xi(b)\underline\xi( t^n) r$. Therefore,
$f=\underline\xi( t^n) r \: \,(t^{-n}.f_1)$ lies in the sub-$\R$-representation generated by $f_1$.  This proves that $\Phi'$ is surjective.

3/ To prove that $\Phi'$ is injective we follow the strategy of  \cite[pp.80 \& 81]{SS1}. 
For $n\in\N$, denote by $\mathbf Y_n$ the subspace of $\XX$ of the functions with support in  $\I  t^n \K$.

\begin{fact}  Consider an element in $\xi\otimes_{\Aa_{anti}}\XX$. There is $n\in \N$ such that it  can be written as a sum of elements of the form $r\otimes f$ where $r\in \R$  and $f\in \mathbf Y_n$.\label{f1}
\end{fact}

\begin{fact} For $k\in \K$ and $n\in \N$, we have
$\B \I t^n  k \cap \B\I t^n\neq \emptyset$ if and only if $ \I t^n  k = \I  t^n$.
\label{f2}

\end{fact}

The facts together prove the injectivity of $\Phi'$.
\begin{proof}[Proof of the facts] The proof of Fact  \ref{f2} in the case of $\mathbf G={\rm GL}_n$  given in \cite[p. 81]{SS1} and \cite[Proposition 7, p.77]{SS1} is the same in the general case of a split group.  For Fact \ref{f1}, 
we first notice  that the statement of  \cite[Lemma 12, p.80]{SS1}  holds in the case of a general split group since $(\G, \I', N_G(\T))$ is a generalized Tits system. 
Therefore, for any $g\in \G$, there is $y\in \T^{++}$ such that $\I y \I g\subseteq \I \T^{++}\K$.  The element $1\otimes \char_{\I g}$ can be written $\xi(\tau_y)^{-1}\otimes \char_{\I y \I g}$. Therefore, an element in  $\xi\otimes_{\Aa_{anti}}\XX$  can be written as a sum of elements of the form $r'\otimes f'$ where $r'\in \R$  and $f'$ has support in $\I \T^{++}\K$.
Now let $y'\in \T^{++}$ and $k\in \K$. One can find $n\in \N$ large enough such that $t^{n} {y'}^{-1}= y''\in \T^{++}$. 
Hence the element $r\otimes \char_{\I y' k}$ can be written $r\xi(\tau_{y''})^{-1}\otimes \char_{\I y'' \I y' k}$  and  by \eqref{eq:add} 
we have $\I y'' \I y' k\subseteq  \I y'' \I y' \I k= \I t^n \I k\subseteq \I t^n \K$.

\end{proof}

\end{proof}

\begin{prop} \label{prop:invariants}
 As a right  $\R\otimes_\k\Hh$-module,   $(\Ind_{\B}^\Gp(\underline\xi))^\I$ is isomorphic to $\xi\otimes _{\Aa_{anti}} \Hh$.

\end{prop}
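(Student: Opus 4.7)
The plan is to derive this proposition from Proposition~\ref{prop:iso} by applying the $\I$-invariants functor. Since the isomorphism $\Phi':\xi\otimes_{\Aa_{anti}}\XX\xrightarrow{\sim}\Ind_\B^\G(\underline\xi)$ is $\G$-equivariant and $\R$-linear, taking $\I$-invariants yields an isomorphism of right $\R\otimes_\k\Hh$-modules $(\xi\otimes_{\Aa_{anti}}\XX)^\I\cong(\Ind_\B^\G(\underline\xi))^\I$, and it remains to identify the source with $\xi\otimes_{\Aa_{anti}}\Hh$. The natural candidate is the map
\begin{equation*}\Theta:\xi\otimes_{\Aa_{anti}}\Hh\longrightarrow(\xi\otimes_{\Aa_{anti}}\XX)^\I,\qquad r\otimes h\longmapsto r\otimes h,\end{equation*}
induced by the inclusion $\XX^\I=\Hh\hookrightarrow\XX$, which is $\Aa_{anti}$-equivariant because the left Hecke action of $a\in\Aa_{anti}$ on $\XX^\I\subset\XX$ coincides with left multiplication in $\Hh$, and whose image lies in the $\I$-invariants since elements of $\XX^\I$ are $\I$-fixed. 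Thus $\Theta$ is a well-defined morphism of right $\R\otimes_\k\Hh$-modules, and via $\Phi'$ it sends $1\otimes\char_\I$ to $f_1$.

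To show $\Theta$ is bijective I would compare $\R$-ranks on the two sides. By Lemma~\ref{rema:mackey} together with Iwasawa $\G=\B\K$ and the Bruhat decomposition in the finite reductive quotient $\overline{\mathbf G}_{x_0}(\fq)$, the target is free over $\R$ of rank $|\B\backslash\G/\I|=|\overline{\mathbf B}(\fq)\backslash\overline{\mathbf G}_{x_0}(\fq)/\overline{\mathbf N}(\fq)|=|\Wf|$. For the source, the basis $\{E(w)\}_{w\in\tilde\W}$ of~\S\ref{remaANT} combined with formula~\eqref{mult} yields the following reduction: for $w=e^{\lambda_0}w_0\in\tilde\W$ with $w_0\in\tilde\Wf$, picking $\lambda\in\tilde\X_*^-(\T)$ with $\lambda+\lambda_0$ antidominant gives $\tau_{e^\lambda}E(w)\in\Aa_{anti}\tau_{w_0}$; since $\xi(\tau_{e^\lambda})\in\R^\times$, the element $1\otimes E(w)$ is an $\R^\times$-multiple of $1\otimes\tau_{w_0}$ in $\xi\otimes_{\Aa_{anti}}\Hh$. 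Moreover, for $t\in\T^0/\T^1\subset\tilde\X_*^-(\T)$, the identity $\tau_{tw_0}=\tau_t\tau_{w_0}$ in $\Hh$ (with $\tau_t\in\Aa_{anti}$) forces $1\otimes\tau_{tw_0}=\xi(\tau_t)(1\otimes\tau_{w_0})$, so $\xi\otimes_{\Aa_{anti}}\Hh$ is generated as an $\R$-module by the $|\Wf|$ elements $\{1\otimes\tau_{\hat w_0}\}_{w_0\in\Wf}$. For surjectivity, a support computation in the spirit of the proof of Proposition~\ref{prop:iso} (using $\I^-\hat w_0\I=\hat w_0\I$ for $w_0\in\Wf$) shows that $\Theta(1\otimes\tau_{\hat w_0})$ is supported in $\B\hat w_0\I$; as these double cosets are distinct when $w_0$ ranges over $\Wf$, the images are linearly independent over $\R$, hence form an $\R$-basis of the target. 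A surjection from an $\R$-module generated by $n$ elements onto a free $\R$-module of rank $n$ is automatically an isomorphism.

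The main obstacle lies in the source-side rank identification: uniformly applying~\eqref{mult} to every $E(w)$ (choosing $\lambda$ sufficiently antidominant depending on $w$) and then using the relation $\tau_{tw_0}=\tau_t\tau_{w_0}$ to collapse the generating set down to exactly $|\Wf|$ elements, matching the target rank given by Lemma~\ref{rema:mackey}.
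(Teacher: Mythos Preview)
Your argument is essentially the paper's own proof: both factor through the composition
\[
\xi\otimes_{\Aa_{anti}}\Hh\longrightarrow(\xi\otimes_{\Aa_{anti}}\XX)^\I\xrightarrow{\ \Phi'\ }(\Ind_\B^\G(\underline\xi))^\I,
\]
use Lemma~\ref{rema:mackey} to see the target is $\R$-free of rank $|\Wf|$, use \eqref{mult} together with the regularity of $\xi$ to see the source is generated over $\R$ by the $|\Wf|$ elements $1\otimes\tau_{\tilde w_0}$, and then deduce the isomorphism from surjectivity. The only difference is how surjectivity is obtained: the paper cites \cite[Proposition~5.16]{Compa}, which gives directly that $1\otimes\tau_{\tilde w_0}\mapsto f_{w_0}$; you instead argue by a support computation.

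There is one small gap in your version. From $\B\I=\B\I^-$ and $\I^-\hat w_0\I=\hat w_0\I$ you correctly get that the image of $1\otimes\tau_{\hat w_0}$ has support contained in $\B\hat w_0\I$, hence lies in $\R\cdot f_{w_0}$. But ``support contained in disjoint double cosets'' does not yield linear independence unless you also know each image is nonzero, and at this stage you cannot invoke injectivity of $\Theta$. You must actually evaluate: writing $\I\hat w_0\I=\coprod_j\I\hat w_0 j$, one checks that $f_1(\hat w_0\cdot j^{-1}\hat w_0^{-1})\neq 0$ forces $\hat w_0 j^{-1}\hat w_0^{-1}\in\B\I^-\cap\hat w_0\I\hat w_0^{-1}$, hence $j\in\hat w_0^{-1}\Iw\hat w_0\cap\I\hat w_0\I\hat w_0^{-1}$, and comparing double cosets in $\tilde\W$ gives $\I\hat w_0 j=\I\hat w_0$. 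Thus only the term $j=1$ contributes, with value $f_1(1)=1$, so the image is exactly $f_{w_0}$. This is precisely the content of \cite[Proposition~5.16]{Compa} and is what is missing from your sketch; once added, your proof and the paper's coincide.
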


\begin{proof} 
By Lemma \ref{rema:mackey}, 
the $\R$-module $( \Ind_{\B}^\Gp(\underline\xi))^\I$  is free
of rank $\vert \B\backslash\Gp/\I \vert=\vert \Wf\vert$. More precisely,    for any $w\in \Wf$, fix  lifts $\tilde w\in \tilde \Wf$  and $\hat {\tilde w}\in N_\G(\T)$ for $w$,  and denote by $f_w$ the  function in $( \Ind_{\B}^\Gp(\underline\xi))^\I$ with support $\B \hat{\tilde w} \I$ and value $1_\R$ at 
 $\hat{\tilde w}$. The family $(f_w)_{w\in\Wf}$ is a basis for the free $\R$-module  $( \Ind_{\B}^\Gp(\underline\xi))^\I$ (see for example \cite[5.5.1]{Compa}  for more detail). 
By  \cite[Proposition 5.16]{Compa},  the 
composition 
\begin{equation}\label{compo}\xi\otimes_{\Aa_{anti}}\Hh\longrightarrow (\xi\otimes_{\Aa_{anti}}\XX)^\I \overset{\Phi'}\longrightarrow  ( \Ind_{\B}^\Gp(\underline\xi))^\I\end{equation} is  a surjective morphism of $\R\otimes_\k\Hh$-modules since the image of $1_\R\otimes \tau_{\tilde w}$ is equal to $f_w$ for all $w\in \Wf$.  From  \eqref{mult} and since $\xi$ is regular,
we deduce that $\xi\otimes_{\Aa_{anti}}\Hh$ is generated as an $\R$-module  by the set of all $1_\R\otimes \tau_{\tilde w}$ for   $w\in \Wf$. This is enough to prove that  \eqref{compo} is injective.

\end{proof}

By Propositions \ref{prop:iso} and \ref{prop:invariants}, there are  natural isomorphisms of $\R$-representations of $\G$  \begin{equation} 
\xi\otimes _{\Aa_{anti}}\XX\cong 
(\Ind_{\B}^\Gp(\underline\xi))^\I \otimes_{\Hh}\XX\cong  \Ind_{\B}^\Gp(\underline\xi)\label{theiso}.\end{equation} 
For any facet $F$ of $C$ containing $x_0$ in its closure, they induce
 morphisms of $\R$-representations of $\Pp_F^\dagger$: \begin{equation} \xi\otimes _{\Aa_{anti}}\XX^{\I_F}\cong 
(\Ind_{\B}^\Gp(\underline\xi))^\I \otimes_{\Hh}\XX^{\I_F}\longrightarrow (\Ind_{\B}^\Gp(\underline\xi))^{\I_F}.\label{theisoF}\end{equation}

 We  identify $\k[\T^0/\T^1]$ with its image in  $\Aa_{anti}$ via $t\mapsto \tau_{t^{-1}}$.
The $\Aa_{anti}$-module $\R$ therefore inherits a structure of $\k[\T^0/\T^1]$-module and this structure is given by the restriction of $\underline \xi$ to $\T^0/\T^1$.
Below, we  also consider $\underline \xi$  (or rather its restriction to $\T^0/\T^1$) as a character of $\Iw$ trivial on $\I$.

\begin{lemma}

Let $F$ be a facet of $C$ containing the hyperspecial vertex $x_0$ in its closure.
There is a natural isomorphism of $\R[[\K]]$-modules 
$$\R\otimes _{\k[\T^0/\T^1]}\XX_{x_0}\cong \Ind_{\Iw}^{{\K}}(\underline\xi).$$
It induces an isomorphism of    $\R[[\mathbf{G}_F^\circ(\Oo)]]$-modules
$$\R\otimes _{\k[\T^0/\T^1]}\XX_{x_0}^{\I_F}\cong (\Ind_{\Iw}^{{\K}}(\underline\xi))^{\I_F}.$$
\label{lemma:finite}
\end{lemma}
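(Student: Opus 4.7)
\emph{Proof plan.} The proof proceeds in three steps. First, I construct an explicit $\K$-equivariant $\R$-linear map
$$
\Psi \colon \XX_{x_0} \longrightarrow \Ind_{\Iw}^{\K}(\underline\xi), \qquad \Psi(f)(g) := \sum_{u \in \Iw/\I} \underline\xi(u)^{-1}\, f(ug),
$$
viewing $f \in \XX_{x_0} = \k[\I\backslash\K]$ as an $\I$-left-invariant function on $\K$, and $\T^0/\T^1 \simeq \Iw/\I$ as a section of the quotient $\Iw \twoheadrightarrow \Iw/\I$. The substitution $u \mapsto u u_0^{-1}$ shows $\Psi(f)(u_0 g) = \underline\xi(u_0)\Psi(f)(g)$, so $\Psi(f)$ indeed lies in $\Ind_{\Iw}^{\K}(\underline\xi)$; right translation on the argument gives $\K$-equivariance. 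The key compatibility: unwinding the left $\Hh$-module structure (so that $\char_{\I g} = g^{-1}\cdot \char_\I$ and $\tau_{t^{-1}}\cdot \char_{\I g} = \char_{\I t^{-1}g}$), the action of $t \in \T^0/\T^1$ identified with $\tau_{t^{-1}} \in \Aa_{anti}$ reads $(t\cdot f)(h) = f(th)$. A second substitution $u \mapsto tu$ then yields $\Psi(t\cdot f) = \underline\xi(t)\Psi(f) = \xi(\tau_{t^{-1}})\Psi(f)$ by Lemma~\ref{lemma:xi}, so $\Psi$ factors through an $\R$-linear $\K$-equivariant map
$$
\tilde\Psi \colon \R \otimes_{\k[\T^0/\T^1]} \XX_{x_0} \longrightarrow \Ind_{\Iw}^{\K}(\underline\xi).
$$

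Next, I verify that $\tilde\Psi$ is bijective by exhibiting matching bases. The left $\T^0/\T^1$-action on $\I\backslash\K$ is free (the identity $\I t^{-1}g = \I g$ forces $t \in \I$, hence trivial in $\Iw/\I$), so for any choice of representatives $\{g_w\}_w$ of the finite set $\Iw\backslash\K$, the family $\{\char_{\I g_w}\}_w$ is a $\k[\T^0/\T^1]$-basis of $\XX_{x_0}$, and $\{1\otimes \char_{\I g_w}\}_w$ is an $\R$-basis of the source of $\tilde\Psi$. A direct evaluation gives $\Psi(\char_{\I g_w})(g_{w'}) = \delta_{w,w'}$, so $\tilde\Psi(1\otimes \char_{\I g_w})$ is the element $f_w \in \Ind_{\Iw}^{\K}(\underline\xi)$ supported on $\Iw g_w$ with value $1$ at $g_w$. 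These $\{f_w\}$ form an $\R$-basis, as in the proof of Lemma~\ref{rema:mackey} applied to $\K$. Hence $\tilde\Psi$ is an $\R[[\K]]$-module isomorphism.

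Finally, for the $\I_F$-invariant statement, passing to $\I_F$-invariants in $\tilde\Psi$ gives an isomorphism $(\R\otimes_{\k[\T^0/\T^1]}\XX_{x_0})^{\I_F} \simeq (\Ind_{\Iw}^{\K}(\underline\xi))^{\I_F}$, and it remains to identify the source with $\R \otimes_{\k[\T^0/\T^1]} \XX_{x_0}^{\I_F}$. Since $\I_F \subseteq \K$ normalizes $\I$, its right action on $\I\backslash\K$ commutes with the left $\T^0/\T^1$-action, giving a decomposition
$$
\XX_{x_0} = \bigoplus_{\Iw g\I_F \in \Iw\backslash\K/\I_F} \k[\I\backslash \Iw g\I_F]
$$
of $\k[\T^0/\T^1]\otimes_\k \k[\I_F]$-bimodules. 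For each double coset, since $\I_F \cap g^{-1}\Iw g$ is a pro-$p$ subgroup of $g^{-1}\Iw g$ and hence contained in the normal pro-$p$ Sylow $g^{-1}\I g$, an orbit--stabilizer argument yields a bimodule isomorphism $\k[\I\backslash \Iw g \I_F] \simeq \k[\T^0/\T^1] \otimes_\k \k[\I_F/L_g]$ with $L_g = \I_F \cap g^{-1}\I g$. On such a summand the two operations $\R\otimes_{\k[\T^0/\T^1]}(-)$ and $(-)^{\I_F}$ commute visibly (acting on separate tensor factors), each producing a copy of $\R$; summing over double cosets yields the desired identification.

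\emph{Main obstacle.} The substantive point is the last step. Because $\I_F$ is pro-$p$ and $\k$ may have characteristic $p$, one cannot commute $\I_F$-invariants with $\R\otimes_{\k[\T^0/\T^1]}(-)$ by any averaging argument; what makes the commutation formal is the orbit-wise splitting of the $\T^0/\T^1$- and $\I_F$-actions on $\I\backslash\K$ onto orthogonal tensor factors, reducing the verification to the trivial rank-one free case.
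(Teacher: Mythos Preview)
Your argument is correct. The construction of $\tilde\Psi$ in the first two steps recovers exactly the map the paper writes down (one checks $\Psi(\char_\I)=\varphi$), so that part is essentially identical to the paper's proof, only more explicit.

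The third step is where your approach genuinely differs. The paper argues as follows: it observes directly that $\XX_{x_0}^{\I_F}$ is a free $\k[\T^0/\T^1]$-module (with basis $\{\char_{\I x\I_F}\}_x$ over $\Iw\backslash\K/\I_F$), hence projective; since $\k[\T^0/\T^1]$ is a Frobenius algebra, projective implies injective, so $\XX_{x_0}^{\I_F}$ is a direct summand of $\XX_{x_0}$. This makes the natural map $\R\otimes_{\k[\T^0/\T^1]}\XX_{x_0}^{\I_F}\to (\R\otimes_{\k[\T^0/\T^1]}\XX_{x_0})^{\I_F}$ injective, and surjectivity is checked by hand on the obvious generators. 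Your route instead decomposes $\XX_{x_0}$ along $\Iw\backslash\K/\I_F$ and uses the pro-$p$ Sylow argument to show each biset piece splits as $\T^0/\T^1\times(L_g\backslash\I_F)$, so that tensoring with $\R$ and taking $\I_F$-invariants act on disjoint tensor factors and therefore commute. Your method is more elementary in that it avoids invoking the Frobenius property of $\k[\T^0/\T^1]$, at the cost of the explicit biset computation; the paper's method is shorter but leans on self-injectivity. Two small remarks: the clause ``$\I_F\subseteq\K$ normalizes $\I$'' is true (indeed $\I_F\subseteq\I$ since $F\subseteq\overline C$) but unnecessary, since left and right actions on $\I\backslash\K$ commute automatically; and your $\k[\I_F/L_g]$ should really be $\k[L_g\backslash\I_F]$ for the right $\I_F$-action, a harmless notational slip.
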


\begin{proof} 
The first abstract isomorphism  is clear because, as representations of $\K$, we have  $\XX_{x_0}\cong \Ind_{\Iw}^\K \k[\T^0/\T^1]$  and the tensor product commutes with compact induction. We describe this isomorphism explicitly in order to deduce the second one.
 Denote by  $\varphi$ the function in  $\Ind_{\Iw}^{{\K}}(\underline\xi)$ with support $\Iw$ and value $1_\R$ at $1_{\K}$. It is $\I$-invariant.

The following well defined map  realizes the first isomorphism of $\R[[\K]]$-modules:
$$\begin{array}{ccl}\R\otimes _{\k[\T^0/\T^1]}\XX_{x_0}&\longrightarrow &\Ind_{\Iw}^{{\K}}(\underline\xi).\cr  r\otimes {\char}_{\I}&\longmapsto & \ r\varphi.\cr \end{array}$$ 

Consider the $\k[\T^0/\T^1]$-module $\XX_{x_0}^{\I_F}$.
It is free  with basis 
 the set of all $\char_{\I x \I_F}$ for $x$ ranging over a system of representatives of $\Iw\backslash \K/\I_F$.  This can be seen by noticing that $\I' x\I_F$ is the disjoint union of all $\I t x \I_F$ for   $t\in\T^0/\T^1$.
 In particular, $\XX_{x_0}^{\I_F}$ is projective and therefore injective over the Frobenius algebra $\k[\T^0/\T^1]$: it is a direct summand of $\XX_{x_0}$  and  we have an injective morphism
of   $\R[[\mathbf{G}_F^\circ(\Oo)]]$-modules
\begin{equation}\label{mapF}\R\otimes _{\k[\T^0/\T^1]}\XX_{x_0}^{\I_F}\hookrightarrow (\R\otimes _{\k[\T^0/\T^1]}\XX_{x_0})^{\I_F} \cong (\Ind_{\Iw}^{{\K}}(\underline\xi))^{\I_F}.\end{equation}
For $x\in \K$, the $\I_F$-invariant function in $(\Ind_{\Iw}^{{\K}}(\underline\xi))^{\I_F}$ with support $\I' x\I_F$ and value $r\in \R$ at $x$ is the image by \eqref{mapF}
of $r\otimes \char_{\I x \I_F}$. Therefore \eqref{mapF} is surjective.

 %\footnote{ Suppose that $tx \in \I x \I_F$. Then $t\in\I x \I_F x^{-1}= \I\: \I_{xF}$ and $t\in \I\: \I_{xF}\cap \Iw$. Therefore $t\in \I (\I_{x_F}\cap \Iw)$. Since $\I_{x_F}\cap \Iw$ is a pro-$p$-subgroup of $\I'$ it is contained in $\I$ and we obtain $t\in \I$ so that $t\in\T^1$.}

 \end{proof}

\begin{prop}\label{prop:free1}

If  $F$ is a facet of $C$ containing $x_0$ in its closure,  then \eqref{theisoF} is a chain of isomorphisms \begin{equation} \xi\otimes _{\Aa_{anti}}\XX^{\I_F}\cong 
(\Ind_{\B}^\Gp(\underline\xi))^\I \otimes_{\Hh}\XX^{\I_F}\cong (\Ind_{\B}^\Gp(\underline\xi))^{\I_F}.\end{equation}
 of $\R$-representations of  $\Pp_F^\dagger$.
%We have  isomorphisms of $\Pp_F^\dagger$-representations  and of $\R$-modules  $$ \xi\otimes_{\Aa_{anti}}  \mathbf{X}^{\I_F}\cong (\Ind_{\B}^\Gp(\underline\xi))^\I \otimes_{\Hh}\XX^{\I_F}\cong (\Ind_{\B}^\Gp(\underline\xi))^{\I_F}.$$
In particular, the $\R$-module $ \xi\otimes_{\Aa_{anti}}  \mathbf{X}^{\I_F}$ is free.

\end{prop}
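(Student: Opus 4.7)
The first isomorphism in \eqref{theisoF} is formal: tensoring the right $\Hh$-module isomorphism $(\Ind_\B^\Gp(\underline\xi))^\I \cong \xi \otimes_{\Aa_{anti}} \Hh$ provided by Proposition \ref{prop:invariants} with $\XX^{\I_F}$ over $\Hh$ yields
$$(\Ind_\B^\Gp(\underline\xi))^\I \otimes_\Hh \XX^{\I_F} \cong (\xi \otimes_{\Aa_{anti}} \Hh) \otimes_\Hh \XX^{\I_F} \cong \xi \otimes_{\Aa_{anti}} \XX^{\I_F}.$$

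For the second isomorphism, my strategy is to route it through the module $\R \otimes_{\k[\T^0/\T^1]} \XX_{x_0}^{\I_F}$ of Lemma \ref{lemma:finite}. The key intermediate assertion is an isomorphism of $(\R,\H_{x_0}^{opp})$-bimodules
$$\xi \otimes_{\Aa_{anti}} \Hh \cong \R \otimes_{\k[\T^0/\T^1]} \H_{x_0},$$
obtained by combining Proposition \ref{prop:invariants} on the left with Lemma \ref{lemma:finite} applied to $F = C$ (so that $\I_F = \I$ and $\XX_{x_0}^{\I} = \H_{x_0}$) on the right, identified through the restriction-of-functions $\K$-equivariant isomorphism $(\Ind_\B^\Gp(\underline\xi))^\I \cong (\Ind_\Iw^\K(\underline\xi))^\I$ coming from Iwasawa ($\Gp = \B\K$ with $\B \cap \K = \B \cap \Iw$, and $\underline\xi$ trivial on $\B \cap \I$). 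Viewing both sides as $(\Aa_{anti},\H_F^{opp})$-bimodules via $\H_F \hookrightarrow \H_{x_0}$, I then tensor with $\XX_F$ over $\H_F$ and apply the isomorphisms \eqref{transf} and \eqref{transfF} to get
$$\xi \otimes_{\Aa_{anti}} \XX^{\I_F} \cong \xi \otimes_{\Aa_{anti}} \Hh \otimes_{\H_F} \XX_F \cong \R \otimes_{\k[\T^0/\T^1]} \H_{x_0} \otimes_{\H_F} \XX_F \cong \R \otimes_{\k[\T^0/\T^1]} \XX_{x_0}^{\I_F}.$$
A final application of Lemma \ref{lemma:finite} (now for the given $F$) combined once more with Iwasawa identifies this last module with $(\Ind_\B^\Gp(\underline\xi))^{\I_F}$, completing the chain of isomorphisms in \eqref{theisoF}.

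The main delicate step will be verifying that the isomorphism $\xi \otimes_{\Aa_{anti}} \Hh \cong \R \otimes_{\k[\T^0/\T^1]} \H_{x_0}$ indeed respects the right $\H_{x_0}$-action, since this is what licenses the subsequent tensoring over $\H_F$. This reduces to three separate compatibilities: Proposition \ref{prop:invariants} is right-$\Hh$-equivariant by construction; the Iwasawa restriction intertwines the right $\H_{x_0}$-actions because both are computed by the same sums over coset representatives from $\K$; and the isomorphism of Lemma \ref{lemma:finite} is $\K$-equivariant and hence automatically right-$\H_{x_0}$-equivariant on $\I$-invariants. Once these are in place, the freeness of $\xi \otimes_{\Aa_{anti}} \XX^{\I_F}$ as an $\R$-module is immediate from its isomorphism with $(\Ind_\B^\Gp(\underline\xi))^{\I_F}$, which is free of rank $\vert \B \backslash \Gp / \I_F\vert$ by Lemma \ref{rema:mackey}.
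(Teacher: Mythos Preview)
Your proposal is correct and follows essentially the same route as the paper. The paper first establishes the $\K$-equivariant isomorphism $\Ind_{\Iw}^{\K}(\underline\xi)\cong(\Ind_{\B}^{\Gp}(\underline\xi))^{\K_1}$ (your ``Iwasawa restriction'') and then, exactly as you do, reduces via \eqref{transf} to the finite problem and decomposes it through the chain $(\Ind_{\Iw}^{\K}(\underline\xi))^{\I}\otimes_{\H_F}\XX_F\cong\R\otimes_{\k[\T^0/\T^1]}\H_{x_0}\otimes_{\H_F}\XX_F\cong\R\otimes_{\k[\T^0/\T^1]}\XX_{x_0}^{\I_F}\cong(\Ind_{\Iw}^{\K}(\underline\xi))^{\I_F}$ using Lemma~\ref{lemma:finite} and \eqref{transfF}. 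The only cosmetic difference is that the paper proves the Iwasawa comparison once at level $\K_1$ and then takes $\I$- and $\I_F$-invariants, whereas you invoke it separately at each level; your explicit remarks on the right $\H_{x_0}$-equivariance correspond to the paper's one-line observation that passing to $\I$-invariants in the Iwasawa isomorphism yields a right $\H_{x_0}$-module map.
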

\begin{proof}  
There is a well defined morphism of $\R$-representations  of $\K$ \begin{equation}\label{isopreli}\Ind_{\Iw}^{{\K}}(\underline\xi)\longrightarrow  (\Ind_{\B}^\Gp(\underline\xi))^{\K_1}\end{equation} defined by sending the function $\varphi$  onto the  function $f_1\in (\Ind_{\B}^\Gp(\underline\xi))^{\I}$ (notations of the  proof of Lemma \ref{lemma:finite} and Proposition \ref{prop:iso}).
 It is injective since for $k\in \K$, the equality $\B \I k\cap \B \I\neq \emptyset$ implies $k\in \Iw$. 
By Iwasawa decomposition and since $\K_1$ is normal in $\K$,  the $\R[[\K]]$-module $(\Ind_{\B}^\Gp(\underline\xi))^{\K_1}$ is  generated  by the $\K_1$-invariant function with support in $\B \K_1=\B \I$  and value $1_\R$ at $1_\G$. This function is in fact equal to $f_1$ because $\underline\xi$ is trivial on $\I^+$.  Therefore \eqref{isopreli} is an isomorphism.

\medskip

We want to show that the natural morphism
 of $\R$-representations of $\Pp_F^\dagger$
\begin{equation}
(\Ind_{\B}^\Gp(\underline\xi))^\I \otimes_{\Hh}\XX^{\I_F}\longrightarrow(\Ind_{\B}^\Gp(\underline\xi))^{\I_F}\end{equation} is bijective.  By \eqref{transf}, it is enough to show that
the natural morphism of 
$\R[[\mathbf{G}_F^\circ(\Oo)]]$-modules
\begin{equation}
(\Ind_{\B}^\Gp(\underline\xi))^\I \otimes_{\H_F}\XX_F\longrightarrow(\Ind_{\B}^\Gp(\underline\xi))^{\I_F}\end{equation} is bijective. 
Since $x_0$ is in the closure of $F$,
passing to $\I$-invariant vectors in 
\eqref{isopreli} yields an isomorphism of right $\R\otimes_\k\H_{x_0}$-modules and  therefore of
right $\R\otimes_\k\H_F$-modules. Likewise, passing to $\I_F$-invariant vectors yields an isomophism of $\R[[\mathbf{G}_F^\circ(\Oo)]]$-modules.
Therefore we want to show that the natural morphism of $\R[[\mathbf{G}_F^\circ(\Oo)]]$-modules
\begin{equation}\label{s1}
(\Ind_{\Iw}^{{\K}}(\underline\xi)) ^{\I}\otimes_{\H_F}\XX_F\longrightarrow(\Ind_{\Iw}^{{\K}}(\underline\xi))^{\I_F}\end{equation} is bijective. 
Now by Lemma \ref{lemma:finite} and using \eqref{transfF}, we check that \eqref{s1}  can be decomposed into the following    chain of isomorphisms
$$(\Ind_{\Iw}^{{\K}}(\underline\xi))^\I \otimes_{\H_F}\XX_F \simeq  \R\otimes _{\k[\T^0/\T^1]}\H_{x_0}\otimes  _{\H_F}\XX_F\cong \R\otimes _{\k[\T^0/\T^1]}\XX_{x_0}^{\I_F}\cong (\Ind_{\Iw}^{{\K}}(\underline\xi))^{\I_F}.$$  

\end{proof}

\begin{prop} The  $\R$-module $\Ind_{\B}^\Gp(\underline\xi)\simeq\xi\otimes_{\Aa_{anti}} \mathbf X$ is free.
\label{prop:free2}

\end{prop}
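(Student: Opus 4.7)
The plan is to exhibit $\Ind_\B^\Gp(\underline\xi)$ as the colimit of an ascending chain of free $\R$-submodules of finite rank, each a split direct summand of the next with free cokernel; such a colimit is itself free as an $\R$-module.

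The first step is a $\K$-local reduction via Iwasawa decomposition. Since $\Gp = \B\K$ and $\B\cap\K = \B\cap\Iw$, any $f\in\Ind_\B^\Gp(\underline\xi)$ is determined by $f|_\K$, and since $\{\K_m\}_{m\geq1}$ is a fundamental system of neighborhoods of $1$ in $\K$, every smooth $f$ is fixed by $\K_m$ for some $m\geq 1$. Therefore $\Ind_\B^\Gp(\underline\xi) = \bigcup_{m\geq 1} M_m$ where $M_m := (\Ind_\B^\Gp(\underline\xi))^{\K_m}$, and each $M_m$ is a finite-rank free $\R$-module by Lemma \ref{rema:mackey}.

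The second step is to produce an explicit $\R$-basis of $M_m$ indexed by the finite set $S_m := (\B\cap\Iw)\backslash\K/\K_m$: for a chosen representative $k_s\in\K$ of each class $s\in S_m$, let $f_s^{(m)}$ be the function supported on $\B k_s\K_m$ defined by $f_s^{(m)}(bk_sk_m):=\underline\xi(b)$ for $b\in\B$ and $k_m\in\K_m$. This is well defined because $\B\cap k_s\K_m k_s^{-1}$ is a pro-$p$ subgroup of $\B\cap\Iw$, hence contained in the pro-$p$ Sylow $\B\cap\I$ (since $(\B\cap\Iw)/(\B\cap\I) = \T^0/\T^1$ has order prime to $p$), on which $\underline\xi$ is trivial.

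The third step is to compare successive levels. The quotient map $S_{m+1}\twoheadrightarrow S_m$ has finite fibers $F_s$, and a short direct calculation with the basis yields $f_s^{(m)} = \sum_{s'\in F_s}\underline\xi(b_{s,s'})f_{s'}^{(m+1)}$ in $M_{m+1}$, where the $b_{s,s'}\in\B\cap\Iw$ arise from the change of representatives. Since each $\underline\xi(b_{s,s'})\in\R^\times$, any set-theoretic section $\sigma:S_m\to S_{m+1}$ of the quotient map induces an $\R$-linear splitting $\pi_m:M_{m+1}\to M_m$ of the inclusion $M_m\hookrightarrow M_{m+1}$ whose kernel is free on $\{f_{s'}^{(m+1)}:s'\notin\sigma(S_m)\}$. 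Iterating, $M_m\cong M_1\oplus\bigoplus_{i=1}^{m-1}\ker\pi_i$ compatibly with the inclusions, and passing to the colimit gives $\Ind_\B^\Gp(\underline\xi)\cong M_1\oplus\bigoplus_{m\geq 1}\ker\pi_m$, a free $\R$-module. The main technical input is the triviality of $\underline\xi$ on pro-$p$ subgroups of $\B\cap\Iw$ used in the second step; the remainder is formal bookkeeping.
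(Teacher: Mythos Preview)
Your proof is correct and follows essentially the same approach as the paper: exhaust $\Ind_\B^\Gp(\underline\xi)$ by the free finite-rank submodules $(\Ind_\B^\Gp(\underline\xi))^{\K_m}$ and show each successive quotient is free. The paper's version is marginally slicker in that it chooses the representatives $k_i$ for $\B g\K_m/\B g\K_{m+1}$ inside $\K_m$, so the $\K_m$-invariants land exactly on the diagonal $\R\cdot(1,\dots,1)\subset\R^s$ and the freeness of the cokernel is immediate without tracking your units $\underline\xi(b_{s,s'})$, but the underlying idea is the same.
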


\begin{proof} As an $\R$-module, $\Ind_{\B}^\Gp(\underline \xi)$ is the inductive limit of the family $((\Ind_{\B}^\Gp(\underline \xi))^{\K_m})_{m\geq 0}$ where we set $\K_0=\I$. We prove the proposition by first invoking  Lemma \ref{rema:mackey} which ensures  that $\Ind_{\B}^\Gp(\underline \xi)^{\K_0}$ is a free (finitely generated) $\R$-module, and then by proving that for all $m\geq 0$, the quotient $(\Ind_{\B}^\Gp(\underline \xi))^{\K_{m+1}}/(\Ind_{\B}^\Gp(\underline \xi))^{\K_m}$ is a free (finitely generated) $\R$-module. 
For this, let $m\geq 0$. 
For $g\in\G$,  denote by  $\Ind_{\B}^{\B g \K_m}(\underline \xi)$ the subspace of the functions in  $\Ind_{\B}^{\G}(\underline \xi)$ with support in $\B g \K_m$ and decompose the latter  into a  finite disjoint union  $\B g\K_m=\coprod_{i=1}^s \B g k_i \K_{m+1}$.
By Lemma \ref{rema:mackey}, the map 
\begin{eqnarray}  \label{m} (\Ind_{\B}^{\B g \K_m}(\underline \xi))^{\K_{m+1}}\longrightarrow & \R^s\cr \: f\longmapsto& (f(gk_i))_{\1\leq i\leq s}\end{eqnarray}
is a $\R$-linear isomorphism. A function  $f\in  (\Ind_{\B}^{\B g \K_m}(\underline \xi))^{\K_{m+1}}$ is $\K_m$-invariant  if and only if its image by \eqref{m} lies in the  submodule $D$ of $\R^s$ generated by $(1)_{\1\leq i\leq s}$. Since $\R^s/D$ is a free $\R$-module and    $(\Ind_{\B}^\Gp(\underline \xi))^{\K_{m+1}}/(\Ind_{\B}^\Gp(\underline \xi))^{\K_m}$ is isomorphic to the   direct sum of
all  $(\Ind_{\B}^{\B g \K_m}(\underline \xi))^{\K_{m+1}}/(\Ind_{\B}^{\B g \K_m}(\underline \xi))^{\K_m}$ for $g$ in the (finite) set  $\B\backslash \G/ \K_m$, we obtain the expected result.

\end{proof}

\section{\label{sec:reso}Resolutions for principal series representations of ${\rm GL}_n(\mathfrak F)$ in arbitrary characteristic}

Let $\chi: \T\rightarrow \k^\times$ a morphism of groups which we suppose to be trivial on $\T^1$.  We are interested in the principal series $\k$-representation of $\Gp$
$$\mathbf V=\Ind_\B^\G(\chi).$$
and the associated coefficient system $\cV$ defined in  Section \ref{therez}.
As in Section \ref{sec:over-a-ring},  we consider the sub-$\k$-algebra $\Aa_{anti}$ of  $\Hh$  and we  attach to $\chi$ the regular $\k$-character
$\overline\chi: \Aa_{anti}\rightarrow k$  as in Lemma \ref{lemma:xi}. 
Define $\R$ to be the localization of $\Aa_{anti}$ at the kernel of $\overline\chi$ and $\xi: \Aa_{anti}\rightarrow \R$ to be the natural morphism of localization. 
 It is a regular character of $\Aa_{anti}$. There is  a $\k$-character $\overline{\overline\chi}:\R\rightarrow \k$ satisfying $\overline{\overline\chi}\circ \xi=\overline \chi.$
Since $\R$ is a flat $\Aa_{anti}$-module,  
tensoring  the complex of  left $\Aa_{anti}$-modules \eqref{chain-complex} by $\R$ yields an exact sequence of $\R$-representations of $\Gp$:
\begin{equation}\label{chain-complex-local}
    0 \longrightarrow \xi\otimes _{\Aa_{anti}} C_c^{or} (\mathscr{X}_{(d)}, \cX) \xrightarrow{\;\;} \ldots \xrightarrow{\;\;}  \xi\otimes _{\Aa_{anti}}C_c^{or} (\mathscr{X}_{(0)}, \cX) \xrightarrow{\;\;}  \xi\otimes _{\Aa_{anti}}\mathbf{X} \longrightarrow 0
\end{equation}

Suppose that $\mathbf G= {\rm GL}_n$ for $n\geq 1$. Then for any $i\in\{0, \ldots,d\}$, we  can choose
 the facets in  $\Ff_i$  to contain $x_0$ in their closure. 
Therefore, by  Propositions  \ref{prop:sum}, \ref{prop:free1} and \ref{prop:free2}, all the terms of the exact complex  \eqref{chain-complex-local} are free $\R$-modules. The complex splits as a complex of $\R$-modules and it remains exact after tensoring by the $\k$-character $\overline{\overline\chi}$ of $\R$. But $\overline{\overline\chi}\otimes_\R\xi$ is isomorphic to the space $\k$ endowed with the structure of $\Aa_{anti}$-module given by $\overline\chi:\Aa_{anti}\rightarrow \k$.
By Proposition \ref{prop:iso}, this gives a $\Gp$-equivariant resolution of $\Ind_\B^\G(\chi)$:

\begin{equation}\label{chain-complex-local2}
    0 \longrightarrow \overline\chi\otimes _{\Aa_{anti}} C_c^{or} (\mathscr{X}_{(d)}, \cX) \xrightarrow{\;\;} \ldots \xrightarrow{\;\;}  \overline\chi\otimes _{\Aa_{anti}}C_c^{or} (\mathscr{X}_{(0)}, \cX) \xrightarrow{\;\;}  \Ind_\B^\G(\chi)\longrightarrow 0
\end{equation}

 This complex is isomorphic to the augmented  complex associated to the coefficient system on $\mathscr X$ denoted by $  \overline \chi  \otimes _{\Aa_{anti}}\:\cX $ and defined  by 
 $F\longmapsto  \overline \chi \otimes _{\Aa_{anti}}\XX^{\I_F}$  for any facet in $\mathscr X$. By  Proposition \ref{prop:free1},
  $\overline \chi  \otimes _{\Aa_{anti}}\:\cX $
 is isomorphic to $\cV$.  Therefore, the  exact complex  \eqref{chain-complex-local2}  is isomorphic to the  complex \eqref{chain-complexintroV} 
 and we have proved Theorem \ref{theo}.
 Note that by Proposition \ref{prop:sum}, the exact resolution  \eqref{chain-complexintroV} is of the form: \\
  \begin{equation}\label{chain-complex-local3}\\
    0 \longrightarrow  \bigoplus_{F\in \Ff_{d}}\ind_{\Pp_F^\dagger}^\Gp(  (\Ind_\B^\G  \chi) ^{\I_F}\otimes \epsilon_F) \xrightarrow{\;\;} \ldots \xrightarrow{\;\;}  \bigoplus_{F\in \Ff_0}\ind_{\Pp_F^\dagger}^\Gp(  (\Ind_\B^\G  (\chi)) ^{\I_F}\otimes \epsilon_F)\xrightarrow{\;\;}  \Ind_\B^\G(\chi)\longrightarrow 0.
\end{equation}
Here since $\mathbf G= {\rm GL}_n$, the semisimple rank is  $d=n-1$.

\section{A remark about  the Schneider-Vignéras functor \label{sec:sv}}
Assume that $\G= {\rm GL}_n(\mathbb Q_p)$ with $n\geq 2$, and denote by $Z$ its center.   We set $\B_0:=\B\cap \K$. It is a subgroup of $\I'$.

\begin{lemma} \label{lemma:B0}
Let $F$ be a facet of  the standard apartment $\Ap$ containing $x_0$ in its closure. We have 
$ \Pp_F^\dagger \cap \B\subset \B_0 Z.$
\end{lemma}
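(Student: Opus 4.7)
The plan is to write $g = tu$ with $t \in \T$ and $u \in \U$, and to prove separately that $t \in Z \cdot \T^0$ and that $u \in \U \cap \K$. Since $\B_0 = \T^0 \cdot (\U \cap \K)$, this will give $g \in Z \cdot \B_0 = \B_0 Z$.

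The tool I plan to use is the sector retraction $\rho : \mathscr X \to \Ap$ centered at a sector of $\Ap$ based at $x_0$ in the direction of the chamber at infinity stabilized by $\B$. I will need three standard properties of $\rho$: it restricts to the identity on $\Ap$; it is $\T$-equivariant (because $\T$ acts on $\Ap$ by translation and fixes every point at infinity); and it is $\U$-invariant, meaning $\rho(u' y) = \rho(y)$ for every $u' \in \U$ and every $y \in \mathscr X$. The $\U$-invariance is the exact counterpart for $\U$ of Lemma~\ref{lemmaSS'}: given $u' \in \U$ and a strongly antidominant $t' \in \T^{++}$, the defining relation $t' \I^+ (t')^{-1} \subseteq \I^+$ gives $(t')^m u' (t')^{-m} \in \I^+$ for $m$ sufficiently large, so $u'$ fixes $(t')^{-m} x_0$, and these points exhaust a sub-sector of the chosen sector.

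The central observation is that, since $g \in \Pp_F^\dagger$ stabilizes $\overline F$ and $x_0 \in \overline F$, the orbit $\{g^k x_0 : k \geq 1\}$ lies entirely in $\overline F \subset \Ap$. A short induction on $k$ using the above properties of $\rho$ will give $g^k x_0 = t^k x_0$ in $\Ap$: for $k=1$ one has $g x_0 = \rho(gx_0) = \rho(tu x_0) = t\,\rho(ux_0) = t x_0$ (using that $gx_0 \in \Ap$), and the inductive step is identical with $x_0$ replaced by $t^k x_0 \in \Ap$. Boundedness of $\overline F$ then forces the sequence $\{t^k x_0\}_{k \geq 1}$ to be bounded in $\Ap$, and hence the image of $\nu(t)$ in the semisimple apartment to vanish. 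Since $\X_*(\T)/\X_*(Z) \cong \Z^{n-1}$ is torsion-free for $\mathbf G = \mathrm{GL}_n$, this will force $\nu(t) \in \X_*(Z)$, whence $t \in Z \cdot \T^0$. The identity at $k=1$ additionally yields $u x_0 = x_0$, and combining $\U \cap Z = \{1\}$ with the identification of the stabilizer of $x_0$ in $\G$ as $Z \cdot \K$ then gives $u \in \U \cap \K$.

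The main technical ingredient is the $\U$-invariance of $\rho$, but this is a direct analog of Lemma~\ref{lemmaSS'} and is standard in Bruhat--Tits theory; everything else in the argument is formal, and the restriction to $\mathbf G = \mathrm{GL}_n$ enters only through the torsion-freeness of $\X_*(\T)/\X_*(Z)$.
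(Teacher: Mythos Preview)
Your argument is correct and follows the same overall strategy as the paper's proof: iterate the action of $g$ on $x_0$, observe that the orbit stays in the bounded set $\overline F$, and deduce that the torus part of $g$ must lie in $Z\cdot\T^0$. The paper writes $b=\widehat{e^{\lambda_1}}u z$ with $u\in\B_0$, defines $\lambda_m$ by $b\,\widehat{e^{\lambda_m}}x_0=\widehat{e^{\lambda_{m+1}}}x_0$, and then reads off $\lambda_{m+1}\equiv\lambda_1+\lambda_m\pmod{\X_*(Z)}$ by \emph{looking at the diagonal} of the resulting matrix in $\K Z$; finiteness of the vertex set of $\overline F$ forces $\lambda_1\in\X_*(Z)$. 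You replace that matrix computation by the sector retraction $\rho$ and its $\U$-invariance, obtaining $g^kx_0=t^kx_0$ directly. This is cleaner and more building-theoretic, and it isolates precisely the ${\rm GL}_n$-specific input as the torsion-freeness of $\X_*(\T)/\X_*(Z)$, which is implicit but unstated in the paper's argument.

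One small point deserves tightening. From $u\in\U$, $ux_0=x_0$, and $\mathrm{Stab}_\G(x_0)=Z\K$, the conditions $u\in\U\cap Z\K$ and $\U\cap Z=\{1\}$ do not \emph{formally} imply $u\in\K$ (there is no general group-theoretic reason why $A\cap BC=\{1\}\cdot C'$ when $A\cap B=\{1\}$). The quickest fix here is to note that $\U\subset{\rm SL}_n(\Corps)$, so a determinant/valuation check on $u=zk$ forces $z\in\Oo^\times\subset\K$; equivalently, $\U$ acts trivially on the central factor of the extended apartment, so $u$ fixes the lift of $x_0$ in the extended building and hence lies in $\K$.
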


\begin{proof}   Any vertex in the closure of $\F$ is of the form $\widehat{e^{\lambda}} x_0$ for some
$\lambda\in \X_*(\T)$ and this vertex coincides with $x_0$ if and only if $\widehat{e^{\lambda}}$ is in the center $Z$ that is to say if $\lambda\in  \X_*(Z)$. Let $b\in \Pp_F^\dagger \cap \B$. There is ${\lambda_1} \in \X_*(\T)$ such that $\widehat{e^{\lambda_1}}x_0\in F$ and $b x_0= \widehat{e^{\lambda_1}}x_0$. Therefore $b\in  \widehat{e^{\lambda_1}}\K Z\cap \B=\widehat{e^{\lambda_1}}\B_0 Z$.  Write $b=  \widehat{e^{\lambda_1}}u z$ with $u\in \B_0$ and $z\in Z$. Inductively, we construct a sequence $(\lambda_m)_{m\geq 1}$ in $\X_*(\T)$ such that $\widehat{e^{\lambda_{m}}}x_0\in F$  and
$b \widehat{e^{\lambda_{m}}} x_0= \widehat{e^{\lambda_{m+1}}}x_0$. It implies  $\widehat{e^{\lambda_1-\lambda_{m+1}}}u   \widehat{e^{\lambda_{m}}} \in \K Z$. Looking at the diagonal of this element, we find
$\lambda_{m+1}=\lambda_1 +\lambda_m \mod \X_*(Z)$ and therefore $\lambda_m= m \lambda_1 \mod \X_*(Z)$ for any $m\geq 1$. If  $\lambda_1\not\in  \X_*(Z)$, then the family of all  $\widehat{e^{\lambda_{m}}} x_0$ is infinite: we obtain a contradiction. Therefore $b\in \B_0 Z$.

\end{proof}

We can identify $\Wf$ with a subgroup of $\G$ and $\Wf$ yields a system of representatives of the double cosets
$\I \backslash \Gp /\B$. For any $i\in\{0, ..., n-1\}$, choose the facets in  $\Ff_i$  to contain $x_0$ in their closure.   For $F\in \Ff_i$, we  choose a system of representatives of 
$\Pp_F^\dagger\backslash \Gp /\B$ in $\Wf$. For $w\in \Wf$,   we can apply Lemma \ref{lemma:B0}.
 to the facet $w^{-1}F$  of $\Ap$.
\medskip

 Let $\chi: \T\rightarrow \k^\times$ a morphism of groups which is trivial of $\T^1$.
Restricting  \eqref{chain-complex-local3} to a complex of $\k$-representations of $\B$, we obtain an exact complex:

 \begin{equation*}\label{chain-complex-local-B}
 \begin{split}
    0 \rightarrow  \!\!\! \!\bigoplus _{F\in \Ff_{n-1}, \atop \;w\in \Pp_F^\dagger \backslash \Gp /\B}\! \! \!\! \!\!\ind_{ w^{-1}\Pp_F^\dagger w\cap \B}^\B (w\star ( (&\Ind_\B^\G  (\chi)) ^{\I_F}\otimes \epsilon_F))\xrightarrow{\;\;} \ldots \cr \ldots& \xrightarrow{\;\;}  \bigoplus_{F\in \Ff_0, \atop \;w\in \Pp_F^\dagger \backslash \Gp /\B}  \!\! \! \!\!\! \ind_{ w^{-1}\Pp_F^\dagger w\cap \B}^\B( w\star ( (\Ind_\B^\G  (\chi)) ^{\I_F}\otimes \epsilon_F))\xrightarrow{\;\;}  \Ind_\B^\G(\chi)\vert_\B\rightarrow 0\cr\end{split}
\end{equation*} where  $w\star ( (\Ind_\B^\G  (\chi)) ^{\I_F}\otimes \epsilon_F)$ denotes the space  $(\Ind_\B^\G  (\chi)) ^{\I_F}\otimes \epsilon_F$ with the group  $w^{-1}\Pp_F^\dagger w\cap \B$
 acting through the homomorphism 
$ w^{-1}\Pp_F^\dagger w\cap \B \xrightarrow{w \,.\, w^{-1}} \Pp_F^\dagger$. 
Therefore, applying Lemma \ref{lemma:B0},   there exist smooth $\k$-representations $ V_0$, ..., $V_{n-1}$ of $\B_0Z$ and an exact resolution of the restriction to $\B$ of $ \Ind_\B^\G(\chi)$  of the form:

 \begin{equation}\label{resoSV}
 \mathcal I_\bullet:   0 \longrightarrow  \ind_{\B_0Z}^\B (V_{n-1}) \xrightarrow{\;\partial_{n-1}\;} \ldots \xrightarrow{\;\;\;\;\;\;}  \ind_{\B_0Z}^\B ( V_0) \xrightarrow{\;\partial_{0}\;}  \Ind_\B^\G(\chi)\vert_\B\longrightarrow 0
\end{equation}

\medskip
From now on, $\k$ has characteristic $p$.
As noted by Z\`abr\`adi in \cite[\S4]{Z}, the argument of \cite[Lemma 11.8]{SV}  generalizes to the case of ${\rm GL}_n(\mathbb Q_p)$. Therefore,  we can compute the image of
$\Ind_\B^\G(\chi)\vert_\B$
by the universal $\delta$-functor $ V \mapsto D^i(V)$, $i\geq 0$,   defined in  \cite{SV} using the cohomology  of the complex $D(\mathcal I_\bullet)$: for $i\geq 0$
$$D^i(\Ind_\B^\G(\chi)\vert_\B)= h^i(D(\ind_{\B_0Z}^\B ( V_0))\xrightarrow{\;D(\partial_{0})\;}D( \ind_{\B_0Z}^\B ( V_1))\rightarrow ... \xrightarrow{\;D(\partial_{n-1})\;}  D(\ind_{\B_0Z}^\B  (V_{n-1}))\rightarrow 0\rightarrow 0...)$$
By \cite[Remark 2.4, i]{SV}, the map $D(\partial_{n-1})$ is surjective. Therefore, we have proved that 

$$D^i(\Ind_\B^\G(\chi)\vert_\B)=0\textrm{ for all }i\geq n-1.$$

%\theendnotes
\end{document}